\DeclareMathAlphabet{\pazocal}{OMS}{zplm}{m}{n}
\newcommand{\C}{\mathbb{C}}
\newcommand{\F}{\mathbb{F}}
\newcommand{\N}{\mathbb{N}}
\newcommand{\PP}{\mathbb{P}}
\newcommand{\Q}{\mathbb{Q}}
\newcommand{\Z}{\mathbb{Z}}
\newcommand{\pA}{\pazocal{A}}
\newcommand{\pB}{\pazocal{B}}
\newcommand{\pC}{\pazocal{C}}
\newcommand{\pF}{\pazocal{F}}
\newcommand{\pG}{\pazocal{G}}
\newcommand{\pI}{\pazocal{I}}
\newcommand{\pM}{\pazocal{M}}
\newcommand{\fG}{\mathfrak{G}}
\newcommand{\fH}{\mathfrak{H}}
\newcommand{\fU}{\mathfrak{U}}
\newcommand{\sF}{\mathsf{F}}
\newcommand{\sL}{\mathsf{L}}
\newcommand{\sM}{\mathsf{M}}
\newcommand{\sN}{\mathsf{N}}
\newcommand{\sU}{\mathsf{U}}
\renewcommand{\setminus}{\mathbin{\fgebackslash}}
\newcommand{\Sn}[1]{S_{#1}}
\newcommand{\QSn}[1]{\mathfrak{S}_{#1}}
\newcommand{\QG}{\fG}
\newcommand{\subgroup}{\leq}
\newcommand{\End}[1]{\operatorname{End}(#1)}
\newcommand{\MatU}[2]{\sU_{#1,#2}} % uniform matroid
\newcommand{\Aut}{\operatorname{Aut}}
\newcommand{\cl}{\operatorname{cl}}
\newcommand{\Surj}{\operatorname{Surj}}
\newcommand{\Hom}{\operatorname{Hom}}
\newcommand{\Emb}{\operatorname{Emb}}
\newcommand{\PSL}{\operatorname{PSL}}
\newcommand{\rank}{\operatorname{rank}}
\newcommand{\Tup}[2]{\operatorname{Tup}_{#1}(#2)}
\newcommand{\QAut}[2]{\mathfrak{Aut}_{#1}(#2)}
\newcommand{\QAutBichon}[1]{\QAut{\textnormal{Bichon}}{#1}}
\newcommand{\QAutBanica}[1]{\QAut{\textnormal{Banica}}{#1}}
\newcommand{\tupI}{\overline{\pI}}
\newcommand{\tupB}{\overline{\pB}}
\newcommand{\tupF}{\overline{\pF}}
\newcommand{\tupC}{\overline{\pC}}
\DeclareMathOperator{\girth}{girth}
\newcommand{\tupInsert}[3]{#1 \cup_{#3} #2}
\newcommand{\freeprod}[3]{#1 \ast_{#3} #2}
\newcommand{\Oscar}{\texttt{OSCAR}\xspace}
\newcommand{\polymake}{\texttt{polymake}\xspace}
\newcommand{\Letterplace}{\texttt{Letterplace}\xspace}
\DeclareMathOperator{\com}{com}
\newtheorem{theorem}{Theorem}[section]
\newtheorem{lemma}[theorem]{Lemma}
\newtheorem{proposition}[theorem]{Proposition}
\newtheorem*{theorem*}{Theorem}
\theoremstyle{definition}
\newtheorem{algorithm}[theorem]{Algorithm}
\newtheorem{question}[theorem]{Question}
\newtheorem{example}[theorem]{Example}
\newtheorem{remark}[theorem]{Remark}
\newtheorem*{assumption*}{Assumption}
\theoremstyle{remark}
\newtheorem{definition}[theorem]{Definition}  
\numberwithin{equation}{section}
\numberwithin{table}{section}
\numberwithin{figure}{section}
\begin{document} 

\title{Quantum automorphisms of matroids}

\author[D. Corey]{Daniel Corey}
\address{
University of Nevada, Las Vegas,
Department of Mathematical Sciences, 
Las Vegas, USA
}
\email{daniel.corey@unlv.edu}

\author[M. Joswig]{Michael Joswig}
\address{Technische Universit\"at Berlin, Chair of Discrete Mathematics / Geometry, Berlin, \& MPI Mathematics in the Sciences, Leipzig, Germany}
\email{joswig@math.tu-berlin.de}

\author[J. Schanz]{Julien Schanz}
\address{Universit\"at des Saarlandes, Fachbereich Mathematik,  Saarbr\"ucken, Germany}
\email{schanz@math.uni-sb.de}

\author[M. Wack]{Marcel Wack}
\address{
Technische Universit\"at Berlin, Chair of Discrete Mathematics / Geometry, Berlin}
\email{wack@math.tu-berlin.de}

\author[M. Weber]{Moritz Weber}
\address{Universit\"at des Saarlandes, Fachbereich Mathematik,  Saarbr\"ucken, Germany}
\email{weber@math.uni-sb.de}

\maketitle

\begin{abstract}
Motivated by the vast literature of quantum automorphism groups of graphs, we define and study quantum automorphism groups of matroids. A key feature of quantum groups is that there are many quantizations of a classical group, and this phenomenon manifests in the cryptomorphic characterizations of matroids. Our primary goals are to  understand, using theoretical and computational techniques,  the relationship between these quantum groups and to find when these quantum groups exhibit quantum symmetry. Finally, we prove a matroidal analog of Lov\'{a}sz's theorem characterizing graph isomorphisms in terms of homomorphism counts.

\medskip

\noindent \textbf{MSC 2020:} 16T30 (primary), 05B35, 05E16, 16-04 (secondary)

% 05B35 = Combinatorial aspects of matroids and geometric lattices 
% 16T30 = Connections of Hopf algebras with combinatorics
% 05E16 = Combinatorial aspects of groups and algebras
% 16-04 = Software, source code, etc. for problems pertaining to associative rings and algebras

\medskip

\noindent \textbf{Keywords:} automorphism, compact quantum group, involutive algebra, matroid, noncommutative Gr\"{o}bner basis computation

\end{abstract}

\section{Introduction}

In mathematics, symmetry is typically captured by groups and their actions. However, throughout the 20th century, it turned out that this class is too small to cover all aspects of symmetry. This led to  the birth of quantum groups in 1980s, due to the pioneering work of Drinfeld, Jimbo, Manin, Woronowicz and others, revealing a hidden---and often significantly richer---notion of symmetry: quantum symmetry. Since then, many instances of quantum symmetry have been investigated by numerous scientists with often strikingly deeper insights.

The mathematics behind quantum physics is grounded in the theories of $C^{\ast}$-algebras and von Neumann algebras as developed by Gelfand, Naimark, von Neumann and others in the 1930s and 1940s. Woronowicz \cite{Woronowicz} defines compact quantum groups in this setting, which provides the correct notion of quantum symmetry. From the work of Sh.\,Wang in the 1990s, we know that there are more ways of quantum permuting $N$ points rather than just to permute them. His free symmetric quantum group $\QSn{N}$ fits in the framework of Woronowicz, and it contains the well-known symmetric group $\Sn{N}$ \cite{Wang:1998}.  The containment is strict if and only if $N\geq 4$ \cite[p.\,496]{Wang:1999}. There is a precise definition of an action of a quantum group on $N$ points, and Wang showed that $\QSn{N}$ is the maximal compact quantum group admitting such an action. This yields a precise formulation of the statement that there are more quantum permutations than permutations. 

Note that $\QSn{N}$ qualifies as a quantum version of $\Sn{N}$ in the following sense. A compact quantum group $\QG$ (in the sense of Woronowicz) is a $C^{\ast}$-algebra (briefly, a complex Banach algebra endowed with a conjugate-linear involution compatible with the norm).  If $\QG$ is turned commutative (by dividing the commutator ideal), we obtain $C(G)$, the algebra of $\C$-valued continuous functions on some compact group $G$. In the case of $\QSn{N}$, this group is $\Sn{N}$.

That there is no unique quantum version of a given group is an interesting and fundamental feature of the subject.  In fact, there are groups with an abundance of quantizations. The group $U_N$ of unitary, complex $N\times N $ matrices has infinitely many quantizations. It is shown in  \cite{MangWeber} that to each subsemigroup $H\subseteq (\mathbb N_0,+)$ of the additive semigroup of natural numbers, there is a quantum group $\fU_N(H)$ such that
\begin{equation*}
U_N\leq \fU_N(H)\leq \fU_N.
\end{equation*}
Here, $\fU_N$ is Wang's free unitary quantum group, a kind of maximal quantization of $U_N$.

With the quantum symmetric group $\QSn{N}$ in hand, one can pass to quantum automorphism groups of finite spaces with additional structures. In 2003, Bichon defined quantum automorphism groups of finite graphs (without loops or  multiple edges) that is the terminal object in the category of quantum transformation groups of a graph $\Gamma$. In 2005, Banica gave a slightly modified definition. Both versions contain the (classical) automorphism group of the given graph  $\Gamma$, i.e., we have
\begin{equation}
\label{eq:quantumAutGraphs}
C(\Aut(\Gamma)) \leq \QAutBichon{\Gamma}\leq \QAutBanica{\Gamma}\leq  \QSn{N}
\end{equation}	
where $N$ is the number of vertices of $\Gamma$. Both quantum groups, Bichon's and Banica's, are quantum versions of $\Aut(\Gamma)$ in the above sense. It turns out that Banica's definition behaves nicer in many respects. For instance, we have $\QAut{\textnormal{Banica}}{\Gamma}=\QAut{\textnormal{Banica}}{\Gamma^c}$, where $\Gamma^c$ is the complement graph of $\Gamma$. Likewise, we have $\Aut(\Gamma)=\Aut(\Gamma^c)$, but there are graphs for which $\QAut{\textnormal{Bichon}}{\Gamma} \neq \QAut{\textnormal{Bichon}}{\Gamma^c}$ holds (e.g., the complete graph on $N\geq 4$ vertices). In any case, for the moment, we know of two different definitions for quantum automorphism groups of graphs.

The graph $\Gamma$ has \textit{quantum symmetry} if 
\begin{equation*}	
C(\Aut(\Gamma)) \neq \QAutBanica{\Gamma}.
\end{equation*}
In principle, one could call it \enquote{quantum symmetry in Banica's sense} and also define a notion of \enquote{quantum symmetry in Bichon's sense}, but the latter has not appeared in the literature so far. For each individual inclusion in \eqref{eq:quantumAutGraphs} there are examples of graphs such that it is strict or an equality, and this provides a certain hierarchy of quantum symmetry for graphs.

The study of quantum automorphism groups of graphs has recently experienced quite some interest, in particular due to the links to algebraic combinatorics (in terms of a \enquote{quantum Lov\'{a}sz Theorem}, see \S\ref{sec:Lovasz}) and quantum information theory, see the work of Mancinska and Roberson \cite{MancinskaRoberson}.  There have been several attempts to generalize the theory to other classes of combinatorial objects such as hypergraphs, Hadamard matrices \cite{Gromada} and quantum graphs \cite{Daws:2022}.

In this work we propose several definitions for quantum automorphism groups of matroids. Introduced by Whitney \cite{Whitney:1935} in the 1930's, a matroid is a common combinatorial generalization of linear dependence of vectors and cycles in a graph. Matroids are notorious for their cryptomorphic definitions; we focus on the characterizations involving independent sets, bases, flats, and circuits. Each axiom system determines the same classical automorphism group of a matroid $\sM$.
We define quantum automorphism groups of the matroid $\sM$
\begin{equation*}
  \QAut{\pI}{\sM},\ \QAut{\pB}{\sM},\ \QAut{\pF}{\sM},\ \QAut{\pC}{\sM}
\end{equation*}
by adding relations to those of $\QSn{N}$ derived from the independent sets, bases, flats, and circuits definitions of $\sM$, respectively, in the spirit of Bichon and Banica.  We have the surprising feature that each of these yields an, a priori, different definition of a quantum automorphism group. However, for nice classes of matroids, we have a chain of inclusions, similar to the case of Bichon's and Banica's definitions of quantum automorphism groups of graphs. Our main results are summarized in the following theorem; see \S\ref{sec:QuantumAutomorphismsAxioms} for details on the terminology.  
\begin{theorem*}
  For every  matroid $\sM$ we have 
\begin{equation*}
  C(\Aut(\sM)) = \QAut{\pF}{\sM} \leq \QAut{\pB}{\sM} = \QAut{\pI}{\sM}.
\end{equation*}
If $\sM$ is a simple rank $3$ matroid and the ground set $E(\sM)$ is not equal to $F_1 \cup F_2 \cup F_3$ for triangles  $\{F_1, F_2, F_3\}$, then
\begin{equation*}
  C(\Aut(\sM)) = \QAut{\pF}{\sM} \leq \QAut{\pC}{\sM} \leq \QAut{\pB}{\sM} = \QAut{\pI}{\sM}.
  \end{equation*}
\end{theorem*}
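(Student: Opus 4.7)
The plan is to prove the theorem by establishing three essentially separate claims about the defining relations of the matroidal quantum-group algebras. The first is $\QAut{\pB}{\sM} = \QAut{\pI}{\sM}$ for every matroid. The inclusion $\QAut{\pI}{\sM} \leq \QAut{\pB}{\sM}$ is immediate, since the bases are the maximum-size independent sets, so the independent-set relations specialize at the top size to the basis relations. For the converse the basis relations must imply all lower-size independent-set relations; the key tool is the identity
\begin{equation*}
p(I \to I') \;=\; \sum_{j' \notin I'}\, p\bigl(I \cup \{i^*\} \to I' \cup \{j'\}\bigr),
\end{equation*}
which holds in any quantum permutation algebra (with $p(A \to B) = \sum_{\phi \colon A \to B} \prod_{a \in A} u_{a,\phi(a)}$ and $i^* \notin I$ arbitrary) and follows from $\sum_{j'} u_{i^*,j'} = 1$ combined with row-orthogonality of the generators. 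Iterating up to size $\rank(\sM)$ and using matroid augmentation to pick a basis $B \supseteq I$, one expresses $p(I \to I')$ as a sum of terms $p(B \to B'')$ with $B'' \supseteq I'$; when $I'$ is not independent no such $B''$ can be a basis, so the basis relations kill every term.

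Next I would tackle $\QAut{\pF}{\sM} = C(\Aut(\sM))$. The inclusion $C(\Aut(\sM)) \leq \QAut{\pF}{\sM}$ is automatic, so the task is to show that the universal $\QAut{\pF}{\sM}$-algebra is commutative, at which point it is canonically the algebra of functions on its Gelfand spectrum, namely $\Aut(\sM)$. My strategy is to exploit the flat-cover partition axiom — for any flat $F$, the flats covering $F$ partition $E(\sM) \setminus F$ into cover-classes — and translate this rigidity into quantum-permutation relations. Combining such relations for flats of every rank in the geometric lattice, together with the intersection-of-flats-is-a-flat property, should yield the full set of commutators $u_{ij}u_{k\ell} = u_{k\ell}u_{ij}$. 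I expect this step to be the technical heart of the proof, since marshalling the full strength of the flat axioms into pairwise commutativity is nontrivial.

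For the rank-$3$ refinement the only new inclusion is $\QAut{\pC}{\sM} \leq \QAut{\pB}{\sM}$. In a simple rank-$3$ matroid, $\binom{E(\sM)}{3}$ partitions into bases (non-collinear triples) and triangles (three-element circuits), while four-element circuits are four-element sets containing no collinear triple. Preserving triangles as a set system should be equivalent to preserving bases via a quantum complementation argument inside $\binom{E(\sM)}{3}$; the hypothesis that $E(\sM)$ is not the union of three triangles is exactly what enforces the combinatorial richness required for the complementation to survive in the noncommutative setting, since otherwise degenerate configurations may prevent recovery of every basis relation from the triangle relations. Because $\QAut{\pC}{\sM}$ carries the additional four-element circuit relations it is a priori a smaller quantum group than the one cut out by the triangle relations alone, yielding the claimed inclusion; the main delicate point here is to locate precisely where the triangle-cover hypothesis is invoked in the complementation argument.
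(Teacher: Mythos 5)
The genuine gaps are in the second and third claims, where you offer strategies rather than arguments. For $\QAut{\pF}{\sM}=C(\Aut(\sM))$ you propose to encode the cover-partition and intersection-closure properties of flats and assert that this ``should yield the full set of commutators''; no argument is given, and in fact none of those axioms is needed. The paper's proof uses only that the whole ground set $E(\sM)$ is a flat: consequently $u_{AB}=0$ in $\QAut{\pF}{\sM}$ whenever $A,B\in E(\sM)^N$ (with $N=|E(\sM)|$), $A$ has no repeated entries and $B$ does, and a short general lemma shows that any quantum permutation group with these relations is commutative (downward induction on the tuple length, then $u_{ab}u_{cd}=\sum_x u_{ab}u_{cd}u_{xb}=u_{ab}u_{cd}u_{ab}$, a self-adjoint element, whence $u_{ab}u_{cd}=u_{cd}u_{ab}$); Proposition \ref{prop:recoverOrdinaryAut} then identifies the commutative quotient with $C(\Aut(\sM))$. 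For $\QAut{\pC}{\sM}\leq \QAut{\pB}{\sM}$ your sketch misplaces the difficulty: in a simple rank-$3$ matroid every repetition-free dependent $3$-tuple is automatically a circuit tuple, so for such $B$ the generator $u_{AB}$ of $I(\QAut{\pB}{\sM})$ is already a circuit relation---no ``quantum complementation'' and no hypothesis on triangles is needed for that case. The problematic generators are $u_{AB}$ with $B=(b_1,b_2,b_1)$, a nonadjacent repetition: these are neither zero in $\QSn{E}$ nor circuit relations. The hypothesis $E(\sM)\neq F_1\cup F_2\cup F_3$ is invoked exactly there, applied to the three lines $\cl_{\sM}(\{a_i,a_j\})$ spanned by pairs from the basis tuple $A=(a_1,a_2,a_3)$: it supplies $a_4$ making $(a_1,a_2,a_3,a_4)$ a circuit tuple, and appending $\sum_x u_{a_4x}=1$ writes $u_{AB}$ as a sum of circuit relations $u_{(a_1,a_2,a_3,a_4),(b_1,b_2,b_1,x)}$. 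You explicitly leave open ``where the triangle-cover hypothesis is invoked,'' which is precisely the heart of the proof; moreover the remark that $\QAut{\pC}{\sM}$ is a priori smaller than the group cut out by triangle relations alone does not by itself give containment in $\QAut{\pB}{\sM}$---one must exhibit every basis relation inside the circuit ideal.

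Your treatment of $\QAut{\pB}{\sM}=\QAut{\pI}{\sM}$ is essentially the paper's argument (insert $\sum_{y} u_{i^{*}y}=1$ and use matroid augmentation), but the identity you base it on is false as stated: in $\QSn{E}$ only \emph{adjacent} factors satisfy the orthogonality relations, so the terms with $j'\in I'$ need not vanish, and the bijections of $I\cup\{i^{*}\}\to I'\cup\{j'\}$ that do not extend a bijection $I\to I'$ are not produced by the expansion; already for $I=\{a\}$, $I'=\{b\}$, $i^{*}=c\neq a$ the right-hand side evaluates to $u_{ab}+u_{cb}$ rather than $u_{ab}$. The repair is exactly what the paper's Lemma \ref{lem:inclusion-meta} does: one does not need those extra terms to vanish, because a length-$(k+1)$ target tuple with a repeated entry is not an independent tuple, so every summand of $u_{AB}\sum_y u_{i^{*}y}$ is already a generator of the independent-tuple ideal at the next length.
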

Using the free open-source computer algebra system \Oscar \cite{OSCAR-book, Oscar}, which is written in \texttt{Julia}, we develop code to compute $\QAut{\pB}{\sM}$ and $\QAut{\pC}{\sM}$, and and determine whether these are commutative. In particular, we find several examples where $\QAut{\pB}{\sM} \leq \QAut{\pC}{\sM}$ and others where $\QAut{\pC}{\sM} \leq \QAut{\pB}{\sM}$. 

In \S\ref{sec:Lovasz},  we derive a matroidal analog of Lov\'{a}sz's graph homomorphism count theorem.  This theorem asserts that two graphs are isomorphic if and only if the number of graph homomorphism from $\Gamma$ \textit{to} these graphs are equal for all graphs $\Gamma$.  By the main result of \cite{MancinskaRoberson}, two graphs are quantum isomorphic, in the sense of Banica, if these homomorphism counts agree for all \textit{planar} graphs. We take the first steps in investigating the connection between (quantum) isomorphic matroids and counts of matroid maps (i.e., \textit{strong maps}) by proving an analog of Lov\'{a}sz's theorem for matroids. The crucial difference is that matroid isomorphism is determined by strong map counts \textit{from} the candidate matroids. 

Our work is just the beginning of the study of quantum symmetry for matroids. Our hope is to provide insight into the world of matroids by dividing the class of matroids into subclasses of ``more complicated/richer'' matroids (having a high degree of quantum symmetry) and \enquote{easier} ones (with a low or no degree of quantum symmetry).

\subsection*{Code} The code used to collect the data recorded in Appendix \ref{sec:tables} may be found at
\begin{center}
	\url{https://github.com/dmg-lab/QuantumAutomorphismGroups.jl} \enspace .
\end{center}
This \texttt{GitHub} repository also contains noncommutative Gr\"{o}bner bases for the quantum automorphism groups obtained, stored in the \texttt{mrdi} file format \cite{mrdi-files}. 

\subsection*{Conventions and notation} Denote by $\N_{0}$ the set of nonnegative integers. Ordinary groups are written in normal font and quantum groups are written in fraktur. For example, the symmetric group on the (usually finite) set $E$ is denoted by $\Sn{E}$, whereas the quantum symmetric group is denoted by $\QSn{E}$.  All graphs are simple in the sense that they do not have loops or multiple elements. All ideals of noncommutative rings are understood to be two-sided.  

\subsection*{Acknowledgements}
We thank Viktor Levandovskyy and Simon Schmidt for helpful discussions.  Corey, Joswig, Schanz and Weber are partially supported by the Deutsche Forschungsgemeinschaft (DFG, German Research Foundation), SFB-TRR 195-286237555 ``Symbolic Tools in Mathematics and their Application.'' Joswig has further been supported by DFG under Germany's Excellence Strategy -- The Berlin Mathematics Research Center MATH$^+$ (EXC-2046/1, project ID 390685689). 

\section{Matroids and their automorphism groups}

Matroids admit numerous cryptomorphic axiomatic systems. We recommend to the reader the textbook \cite{Oxley} as a general reference for matroid theory. From the viewpoint of bases, a matroid is a pair $\sM = (E,\pB)$ where $E$ is a nonempty finite set, called the \textit{ground set} of $\sM$, and $\pB$ is a nonempty subset of $2^E$ that satisfies the \textit{basis exchange axiom}: for each pair of distinct elements $A,B\in \pB$ and $a\in A\setminus B$, there is a $b\in B\setminus A$ such that $A\setminus \{a\} \cup \{b\}$ is in $\pB$.  An element of $\pB$ is called a \textit{basis} of $\sM$. To emphasize the dependence on $\sM$, we write $E(\sM)$ for the ground set of $\sM$ and  $\pB(\sM)$ for the bases of $\sM$.   It follows readily from this axiom that all elements of $\pB(\sM)$ have the same size $r$, and this is called the \textit{rank} of $\sM$. The other common characterizations of $\sM$ may be derived from $\pB(\sM)$ in the following way. 

\begin{itemize}
\item  An \textit{independent set} of $\sM$ is a subset $A\subseteq E(\sM)$ that is contained in a basis, and the set of independent sets is denoted by $\pI(\sM)$.
\item  The \textit{rank function} of $\sM$ is $\rank_{\sM}:2^{E(\sM)} \to \N_{0}$ defined by 
  \begin{equation*}
    \rank_{\sM}(A) = \max(|B| \, : \, B\in \pI(\sM)).  		
  \end{equation*}
\item A \textit{flat} of $\sM$ is a subset $A \subseteq E(\sM)$ such that $\rank_{\sM}(A\cup \{b\}) > \rank_{\sM}(A)$ for all $b\notin A$. The set of flats of $\sM$ is denoted by $\pF(\sM)$. 
\item A \textit{circuit} of $\sM$ is a subset $A \subseteq E(\sM)$ such that $A\setminus \{a\}$ is independent for all $a\in A$. The set of circuits of $\sM$ is denoted by $\pC(\sM)$. 
\end{itemize}

A \textit{loop} of $\sM$ is an element $x\in E(\sM)$ such that $\{x\}$ is dependent. Two non-loop elements $x,y\in E(\sM)$ are \textit{parallel} if $\{x,y\}$ is dependent. The matroid $\sM$ is \textit{simple} if it does not have loops or parallel elements. 

Since matroids are a combinatorial abstraction of dependence coming from linear algebra and graph theory, it is useful to understand how the above terms are interpreted in these contexts. The linear-algebraic model for a matroid is derived from a list of vectors $X = (x_{i} \, : \, i\in E)$, indexed by a finite set $E$, that span a $r$-dimensional vector space $V$. The matroid of $X$, denoted $\sM[X]$, is the matroid whose ground set is $E$ and 
\begin{equation*}
  \pB(\sM[X]) = \{ B\subseteq E \, : \, (x_{i} \, : \, i\in B) \text{ is a vector space basis of } V \}. 	
\end{equation*}
The independent sets of $\sM[X]$ correspond to collections of linearly independent vectors in $X$, the rank function of $\sM[X]$ records the dimension of the linear span of the input vectors, and the flats of $\sM[X]$ are the subsets of $E$ such that the linear span of the corresponding vectors contains no other elements of $X$. Given a field $\F$, a matroid $\sM$ is $\F$-realizable if $\sM = \sM[X]$ for a $E(\sM)$-indexed list $X$ of vectors in a $\F$-vector space. When $x_i\neq 0$ for each $i\in E(\sM)$, the vectors $x_i$ define points in the projective space $\PP(V)$; this is called a \textit{projective realization} of $\sM[X]$. 

Circuits are best understood from the graph-theoretic viewpoint. Let $\Gamma$ be a finite graph with vertex set $V(\Gamma)$ and edge set $E(\Gamma)$. The matroid of $\Gamma$, denoted $\sM[\Gamma]$, is the matroid with ground set $E = E(\Gamma)$ and the bases $\pB(\sM[\Gamma])$ consist of the spanning forests of $\Gamma$. A circuit of $\sM[\Gamma]$ is subset of edges that form a cycle of $\Gamma$. 

Given two matroids $\sM_1$ and $\sM_2$, an \textit{isomorphism} of $\sM_1$ and $\sM_2$ is a bijection $\varphi: E(\sM_1) \to E(\sM_2)$ such that $A\in \pB(\sM_1)$ if and only if $\varphi(A)\in \pB(\sM_2)$. An \textit{automorphism} of a matroid $\sM$ is an isomorphism from $\sM$ to itself. The group (under function composition) of all automorphisms of $\sM$ is called the \textit{automorphism group} of $\sM$ and is denoted by $\Aut(\sM)$. 

\begin{example}[Uniform matroid]
  The rank-$r$ \textit{uniform matroid} on the finite set $E$, denoted $\MatU{r}{E}$ is the matroid with ground set $E$ and whose bases are the the $r$-element subsets of $E$. When $E = \{1,\ldots,n\}$, we simply write $\MatU{r}{n}$. This is the matroid of a collection of $n$ vectors in $V$ in linear general position, where $V$ is a $r$-dimensional vector space over a field of infinite order. The automorphism group of $\MatU{r}{E}$ is the symmetric group $\Sn{E}$.
\end{example}

\begin{example}[Fano matroid]
\label{ex:fano}
A fundamental example is the Fano matroid defined by $\sF = \sM[X]$ where $X$ is the sequence of vectors in $(\F_{2})^{3}$ given by the columns of the matrix (which we also denote by $X$)
\begin{equation*}
  X =
  \begin{bmatrix}
    1 & 0 & 1 & 0 & 1 & 0 & 1 \\ 
    0 & 1 & 1 & 0 & 0 & 1 & 1 \\
    0 & 0 & 0 & 1 & 1 & 1 & 1
  \end{bmatrix}.
\end{equation*}
These are exactly the seven nonzero vectors in $(\F_{2})^{3}$. The automorphism group of $\sF$ is isomorphic to $\PSL_3(\F_2)$. This matroid is significant because it is the smallest example which is not $\C$-realizable. 
\end{example}

\begin{figure}
	\includegraphics[height=2.5cm]{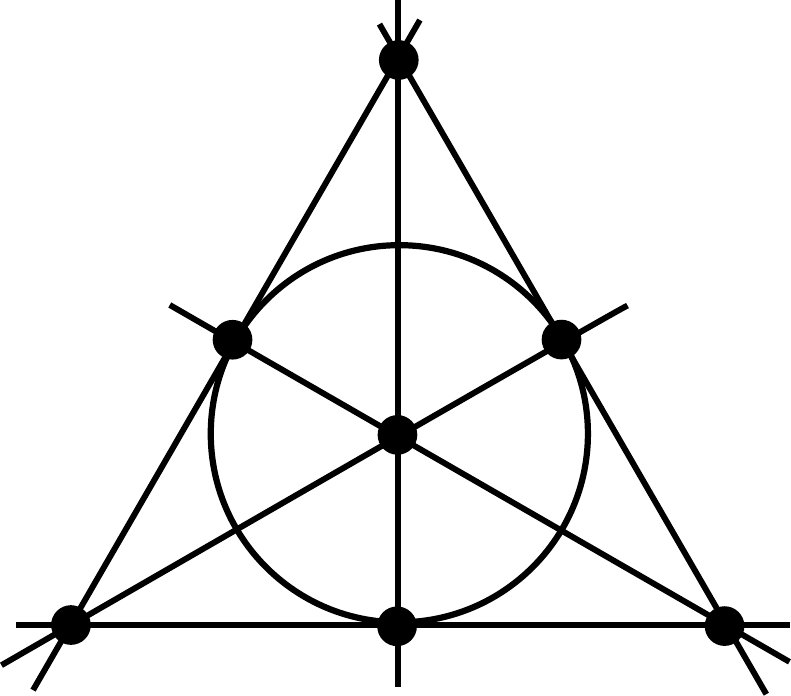}
	\caption{A projective realization over $\F_2$ of the Fano matroid $\sF$}
	\label{fig:FanoMatroid}
\end{figure}

The automorphism group of a graph $\Gamma$, denoted by $\Aut(\Gamma)$,  differs from $\Aut(\sM[\Gamma])$ in significant ways. The former is a subgroup of $\Sn{V(\Gamma)}$, whereas the latter is a subgroup of $\Sn{E(\Gamma)}$. To provide a direct comparison, we may consider the action of $\Aut(\Gamma)$ on $E(\Gamma)$, which defines a group homomorphism $\Aut(\Gamma) \to \Sn{E(\Gamma)}$. The image, denoted by $\Aut_{E}(\Gamma)$, is a subgroup of $\Sn{E(\Gamma)}$.  Nevertheless, $\Aut_E(\Gamma)$ and $\Aut(\sM[\Gamma])$ need not be the same subgroup of $\Sn{E(\Gamma)}$.  The difference between these two groups is the content of  Whitney's $2$-isomorphism theorem \cite{Whitney1933}; see also \cite[Theorem 5.3.1]{Oxley}.

\section{Quantum permutation groups}
\label{sec:quantumPermutationGroups}

An \textit{involutive algebra} is a complex algebra $\QG$ that has an conjugate-linear involution $u\mapsto u^{\ast}$ satisfying $(uv)^{\ast} = v^{\ast} u^{\ast}$ for all $u,v \in \QG$.  All quantum groups we consider are subgroups of the quantum symmetric group as defined in \cite{Wang:1998}, which we now describe. Let $E$ be a finite set. Define the noncommutative polynomial ring in $|E|^2$ variables
\begin{equation*}
\C\langle E^2 \rangle = \C\langle u_{ij} \, : \, i, j \in E \rangle.
\end{equation*}
The ring $\C\langle E^2 \rangle$ is an involutive algebra whose involution is induced by $(u_{ij})^{\ast} = u_{ij}$.  A subset $R\subseteq \C\langle E^2 \rangle$ is \textit{self-adjoint} if $R^{\ast} = R$, where 
\begin{equation*}
	R^{\ast} = \{f^{\ast} \, : \, f\in R \}.
\end{equation*}
If $I$ is a self-adjoint (two-sided) ideal, then $\C\langle E^2 \rangle / I$ is an involutive algebra with involution induced by the one on $\C\langle E^2\rangle$.
Define the ideal $I_E \subset \C\langle E^2 \rangle$ by
\begin{equation}\label{eq:IE}
  I_E = \left\langle
    \begin{array}{c}
      u_{ij}^2 - u_{ij}; \hspace{5pt} u_{ik}u_{i\ell},\hspace{3pt} u_{kj}u_{\ell j} \; (k\neq \ell);  \hspace{5pt} \sum_{k\in E} u_{kj} - 1,\; \sum_{k\in E} u_{ik} - 1
    \end{array} \right\rangle.
\end{equation}
 The \textit{quantum symmetric group} on $E$ is 
\begin{equation} \label{eq:QSn}
  \QSn{E} = \C\langle E^2 \rangle / I_E
\end{equation}
equipped with the coproduct
\begin{equation}\label{eq:Delta}
  \Delta: \QSn{E} \to \QSn{E} \otimes \QSn{E} \hspace{20pt} \Delta(u_{ij}) = \sum_{k\in E} u_{ik} \otimes u_{kj}.  
\end{equation}
The ideal $I_E$ is self-adjoint and so $\QSn{E}$ is an involutive algebra.
Unless stated otherwise tensor products are taken over $\C$.
A \textit{quantum permutation group} $\QG$ on $E$ is an involutive algebra
\begin{equation} \label{eq:QG}
  \QG = \C\langle E^2 \rangle / I
\end{equation}
where $I\supseteq I_E$ is a self-adjoint ideal and the coproduct $\Delta$ restricts to a coproduct on $\QG$. To emphasize the dependence on $\QG$, we write $I(\QG)$ for its defining ideal.  Given quantum permutation groups $\QG_{1}$ and $\QG_{2}$ on $E$, we have that $\QG_1$ is a \textit{subgroup} of $\QG_2$, written $\QG_{1} \leq \QG_{2}$ if $I(\QG_{1}) \supseteq I(\QG_{2})$.

Given a finite group   $G$, denote by $C(G)$ the involutive algebra of complex-valued functions on $G$. A quantum permutation group $\QG\leq \QSn{E}$ is \textit{commutative} if $u_{ij}u_{k\ell} = u_{k\ell} u_{ij}$ for all $i,j,k,\ell \in E$. By the Gelfand--Naimark theorem, the quantum permutation group $\QG$ is commutative if and only if $\QG = C(G)$ for a finite group $G$, see \cite[Exercise~1.10]{Freslon}. We use this fact in an essential way in Proposition \ref{prop:recoverOrdinaryAut}.

Let $\QG \leq \QSn{E}$ and $\fH\leq \QSn{F}$ be quantum permutation groups. Their \textit{free product}, denoted by $\freeprod{\QG}{\fH}{}$, is the subgroup of $\QSn{E\sqcup F}$ whose ideal is 
\begin{equation}
  I(\freeprod{\QG}{\fH}{}) = I(\QG) + I(\fH) + \langle u_{ij} \, : \, (i\in E \text{ and } j\notin F) \text{ or }  (i\notin E \text{ and } j\in F)\rangle. 
\end{equation} 

\begin{remark}\label{rem:coefficients}
  Instead of the noncommutative polynomial ring $\C\langle E^2 \rangle$ with complex coefficients, we could also consider its analog $\Z\langle E^2 \rangle$ over the integers.
  Since the generators of the ideal $I_E$ from \eqref{eq:IE} have integer coefficients, the quotient $\Z\langle E^2 \rangle / I_E$ is defined.
  The coproduct formula \eqref{eq:Delta} remains valid, and so that quotient is an involutive algebra.
  Now, if the ideal $I$ in \eqref{eq:QG} makes sense over the integers, too, we obtain the \textit{integral quantum permutation group} $\QG_\Z=\Z\langle E^2 \rangle /  I$.
  Reading the additive group of the algebra $\QG_\Z$ as a $\Z$-module then allows for tensoring over $\Z$.
  It follows that $\QG_\Z\otimes\C$ is isomorphic to $\QG$ as involutive algebras.
  This is useful for our computations in \S\ref{sec:computations}, which employ $\QG_\Z \otimes \Q$.
\end{remark}

\section{Quantum automorphisms associated to different axiom systems}
\label{sec:QuantumAutomorphismsAxioms}

We begin with a general procedure for producing subgroups of $\QSn{E}$, where $E$ is a finite set. Denote by $\Tup{}{E}$ the set of tuples of elements of $E$ that have length at most $|E|$.   Given tuples $A = (a_1,\ldots,a_k)$ and $B = (b_1,\ldots,b_k)$ of the same length, define
\begin{equation*}
	u_{AB} = u_{a_1, b_1} \cdots u_{a_k, b_k}. 
\end{equation*}
We gather some useful properties of the terms $ u_{AB}$ in $\QSn{E}$.
\begin{proposition}
\label{prop:tupleCoproduct}
Suppose $A, B\in E^{k}$. The following equalities hold in $\QSn{E}$. 
\begin{enumerate}
	\item We have
	\begin{equation*}
		\sum_{C\in E^k} u_{AC} = 1 \hspace{5pt} \text{ and } \hspace{5pt} \sum_{C\in E^k} u_{CB} = 1.
	\end{equation*}
	\item The coproduct of $u_{AB}$ is
	\begin{equation*}
		\Delta(u_{AB}) = \sum_{C\in E^k} u_{AC} \otimes u_{CB}
	\end{equation*}
\end{enumerate}
\end{proposition}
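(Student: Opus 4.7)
The plan is to prove both parts as direct consequences of the defining relations of $I_E$ together with the fact that $\Delta$ is an algebra homomorphism. No major obstacle is expected; the only thing to keep track of is the noncommutativity, which is handled automatically once one works carefully with the product structure on $\QSn{E}\otimes\QSn{E}$.

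For part (1), the starting point is that the relations listed in \eqref{eq:IE} give $\sum_{c\in E} u_{ac}=1$ and $\sum_{c\in E} u_{cb}=1$ in $\QSn{E}$ for every $a,b\in E$. For $A=(a_1,\ldots,a_k)$, I rewrite
\[
\sum_{C\in E^k} u_{AC} \;=\; \sum_{c_1,\ldots,c_k\in E} u_{a_1 c_1}\,u_{a_2 c_2}\cdots u_{a_k c_k} \;=\; \sum_{c_1,\ldots,c_{k-1}} u_{a_1 c_1}\cdots u_{a_{k-1} c_{k-1}}\!\left(\sum_{c_k\in E} u_{a_k c_k}\right).
\]
The innermost sum collapses to $1$, and the remaining sum is handled by induction on $k$ (or by iterating the same peeling-off step). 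The identity $\sum_{C\in E^k} u_{CB}=1$ is proved analogously, peeling off the leftmost factor $\sum_{c_1} u_{c_1 b_1}=1$ instead.

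For part (2), I use that $\Delta\colon\QSn{E}\to\QSn{E}\otimes\QSn{E}$ is a morphism of algebras together with the rule $(x\otimes y)(x'\otimes y')=xx'\otimes yy'$ for multiplication in the tensor product. From \eqref{eq:Delta} applied to each factor,
\[
\Delta(u_{AB}) \;=\; \prod_{i=1}^{k}\Delta(u_{a_i b_i}) \;=\; \prod_{i=1}^{k}\sum_{c_i\in E} u_{a_i c_i}\otimes u_{c_i b_i}.
\]
Expanding this product and collecting the two sides of the tensor signs in the correct order yields
\[
\Delta(u_{AB}) \;=\; \sum_{c_1,\ldots,c_k\in E}\bigl(u_{a_1 c_1}\cdots u_{a_k c_k}\bigr)\otimes\bigl(u_{c_1 b_1}\cdots u_{c_k b_k}\bigr) \;=\; \sum_{C\in E^k} u_{AC}\otimes u_{CB},
\]
which is the claim. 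Both arguments go through in the integral setting of Remark~\ref{rem:coefficients} without change, since all manipulations are over $\Z$.
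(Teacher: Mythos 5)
Your proof is correct and follows essentially the same route as the paper's: part (1) peels off a single row/column sum using the relations $\sum_{c}u_{ac}=1$ and $\sum_{c}u_{cb}=1$, and part (2) uses multiplicativity of $\Delta$ together with $(x\otimes y)(x'\otimes y')=xx'\otimes yy'$, exactly as in the paper's inductive argument, just written as a direct expansion instead of formal induction.
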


\begin{proof}
	To prove (1), by symmetry, it suffices to prove just the left equality. We proceed by induction on $k$. The base case $k=1$ follows from the definition of $\QSn{E}$.  Let $A = (a_1,\ldots,a_{k-1}, a)$ and $A' = (a_1,\ldots,a_{k-1})$. Then	
	\begin{equation*}
		\sum_{C\in E^k} u_{AC} = \sum_{C'\in E^{k-1}} u_{A'C'} \sum_{c\in E} u_{ac}
	\end{equation*}
	Both sums on the right equal 1, the former by the inductive hypothesis and the latter by the base case. 
	
	For (2), we again proceed by induction on $k$. The base case $k=1$ is just the coproduct formula. Let $A,A'$ be as in the proof of (1) and let $B = (b_1,\ldots, b_{k-1}, b)$ and $B' = (b_1,\ldots, b_{k-1})$. Then
	\begin{equation*}
		\Delta(u_{AB}) = \Delta(u_{A'B'}) \Delta(u_{ab}) = \left(\sum_{C'\in E^{k-1}} u_{A'C'} \otimes u_{C'B'}\right) \left(\sum_{c\in E} u_{ac}\otimes u_{cb} \right) = \sum_{C\in E^k} u_{AC} \otimes u_{CB}. 
	\end{equation*}
	The second equality follows from the inductive hypothesis. 
\end{proof}

\noindent Fix a nonempty subset $\pA\subseteq \Tup{}{E}$.
Define the ideal $I_{\pA}$ of $\QSn{E}$ by 
\begin{equation}\label{eq:IA}
  I_{\pA} = \langle u_{AB} \, : \, (A\in \pA \text{ and } B\notin \pA) \text{ or } (A\notin \pA \text{ and } B\in \pA ) \rangle. 
\end{equation}
Let $\QG_{\pA} = \QSn{E}/I_{\pA}$. 
\begin{proposition}
\label{prop:subgroupQSn}
The quotient $\QG_{\pA}$ is a subgroup of the quantum symmetric group $\QSn{E}$.
\end{proposition}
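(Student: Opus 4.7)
To show that $\QG_{\pA} = \QSn{E}/I_{\pA}$ is a quantum permutation group on $E$, two conditions from the definition in \S\ref{sec:quantumPermutationGroups} need to be verified: the defining ideal (pulled back to $\C\langle E^2 \rangle$) must be self-adjoint, and the coproduct $\Delta$ must descend to the quotient, i.e.\ $\Delta(I_{\pA}) \subseteq I_{\pA} \otimes \QSn{E} + \QSn{E} \otimes I_{\pA}$. Since $I_{\pA}$ is an ideal of $\QSn{E}$, which already contains $I_E$, the containment $I \supseteq I_E$ in the definition of a quantum permutation group is automatic.

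The main calculation is the coproduct descent, and Proposition~\ref{prop:tupleCoproduct}(2) does essentially all the work. Fix a generator $u_{AB}$ of $I_{\pA}$; by symmetry I may assume $A \in \pA$ and $B \notin \pA$, with $A, B \in E^k$. Then
\begin{equation*}
\Delta(u_{AB}) = \sum_{C \in E^k} u_{AC} \otimes u_{CB},
\end{equation*}
and I split the sum according to whether $C \in \pA$. If $C \notin \pA$, the pair $(A,C)$ is of mixed type, so $u_{AC} \in I_{\pA}$ and the summand lies in $I_{\pA} \otimes \QSn{E}$. If $C \in \pA$, the pair $(C,B)$ is of mixed type, so $u_{CB} \in I_{\pA}$ and the summand lies in $\QSn{E} \otimes I_{\pA}$. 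Therefore $\Delta(u_{AB})$ lies in the required sum, and $\Delta$ induces a well-defined coproduct on $\QG_{\pA}$.

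For self-adjointness, since $u_{ij}^{\ast} = u_{ij}$, I compute
\begin{equation*}
u_{AB}^{\ast} = u_{a_k,b_k}\cdots u_{a_1,b_1} = u_{\widetilde{A},\widetilde{B}},
\end{equation*}
where $\widetilde{A},\widetilde{B}$ denote the tuples reversed. Provided $\pA$ is closed under reversal of tuples, $\widetilde{A} \in \pA \iff A \in \pA$, and similarly for $\widetilde{B}$; then $u_{\widetilde{A},\widetilde{B}}$ is again a generator of $I_{\pA}$ of the same mixed-type form, so $I_{\pA}^{\ast} \subseteq I_{\pA}$. This reversal closure is automatic in all the intended applications in \S\ref{sec:QuantumAutomorphismsAxioms}, where $\pA$ consists of all ordered tuples spelling out the independent sets, bases, flats, or circuits of a matroid, and thus is stable under the full symmetric group action on tuple coordinates.

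The expected obstacle is cosmetic rather than substantive: the coproduct step is a clean case analysis on $C$ once Proposition~\ref{prop:tupleCoproduct}(2) is invoked, and self-adjointness is a bookkeeping observation whose only content is the reversal symmetry of $\pA$. If one wants the proposition to be stated in full generality without reference to the applications, one option is to take $\pA$ to be reversal-closed by convention (or to replace $\pA$ by its closure under reversal, which does not affect $I_{\pA}$ in the examples of interest).
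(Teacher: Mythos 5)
Your proof is correct and takes essentially the same route as the paper's: after invoking Proposition \ref{prop:tupleCoproduct}(2), each summand $u_{AC}\otimes u_{CB}$ of $\Delta(u_{AB})$ is placed in $I_{\pA}\otimes \QSn{E}$ or $\QSn{E}\otimes I_{\pA}$ according to whether $C\in\pA$, which is exactly the paper's case analysis (stated there, slightly loosely, as landing in $I_{\pA}\otimes I_{\pA}$). Your self-adjointness remark is a fair refinement rather than a deviation: the paper simply asserts that $I_{\pA}$ is self-adjoint, and your computation $u_{AB}^{\ast}=u_{\widetilde{A}\widetilde{B}}$ identifies the needed hypothesis (closure of $\pA$ under reversal), which indeed holds automatically for the tuple sets $\tupI(\sM)$, $\tupB(\sM)$, $\tupF(\sM)$, $\tupC(\sM)$ used in \S\ref{sec:QuantumAutomorphismsAxioms}.
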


\begin{proof}
	The ideal $I_{\pA}$ is self-adjoint, and therefore $\QG_{\pA}$ is an involutive algebra. It remains to show that the coproduct on $\Delta$ on $\QSn{E}$ restricts to a coproduct on $\QG_{\pA}$. By Proposition \ref{prop:tupleCoproduct} we have
	\begin{equation*}
		\Delta(u_{AB}) = \sum_{C\in E^k} u_{AC} \otimes u_{CB}.
	\end{equation*}
	Suppose $u_{AB} \in I_{\pA}$.   Without loss of generality, we may assume that $A\in \pA$ and $B \notin \pA$. If $C\in \pA$, then $u_{CB} \in I_{\pA}$. Otherwise, $C\notin \pA$, and so $u_{AC} \in I_{\pA}$. Therefore, each summand of $\Delta(u_{AB})$ lies in $I_{\pA} \otimes I_{\pA}$, as required.
\end{proof}

Let $\sM$ be a rank-$r$ matroid, and let $A$ be a tuple. Then $A$ is:
\begin{itemize}
	\item an \textit{independent tuple} if it has no repeated elements and its underlying set is independent;
	\item a \textit{basis tuple} if it is independent and has length $r$;
	\item a \textit{flat tuple} if it has no repeated elements and its underlying set is a flat of $\sM$;
	\item a \textit{circuit tuple} if $A = (a,a)$ for a nonloop $a$ or if $A$ has no repeating elements and its underlying set is a circuit of $\sM$.
\end{itemize}
Denote by $\tupI(\sM)$, $\tupB(\sM)$, $\tupF(\sM)$, and $\tupC(\sM)$ the sets of independent, basis, flat, and circuit tuples of $\sM$, respectively. We may apply Proposition \ref{prop:subgroupQSn} to these sets to define quantum automorphism groups of matroids. 
\begin{definition}
\label{def:quantumAutMatroid}
Let $\sM$ be a matroid. 
\begin{itemize}
	\item The \textit{independent sets} quantum automorphism group is $\QAut{\pI}{\sM} = \QG_{\tupI(\sM)}$.  
	\item The \textit{bases} quantum automorphism group is $\QAut{\pB}{\sM} = \QG_{\tupB(\sM)}$.  
	\item The \textit{flats} quantum automorphism group is $\QAut{\pF}{\sM} = \QG_{\tupF(\sM)}$.  
	\item The \textit{circuits} quantum automorphism group is $\QAut{\pC}{\sM} = \QG_{\tupC(\sM)}$.  
\end{itemize}
\end{definition}
\noindent Throughout the paper, we follow the convention described in \S\ref{sec:quantumPermutationGroups} when denoting the ideals of these quantum automorphism groups, e.g., we write $I(\QAut{\pI}{\sM})$ instead of $I_{\tupI(\sM)}$. 

Given a quantum permutation group $\QG \leq \QSn{E}$, denote by $\QG^{\com}$ the commutative quantum permutation group 
\begin{equation*}
	\QG^{\com} = \QG / \langle u_{ab}u_{cd} - u_{cd}u_{ab} \, : \, a,b,c,d \in E\rangle.
\end{equation*}
The following proposition verifies that the quantum automorphism groups in Definition \ref{def:quantumAutMatroid} are, a priori, quantizations of $\Aut(\sM)$.

\begin{proposition}
\label{prop:recoverOrdinaryAut}
	The commutative quantum groups
	\begin{equation*}
		\QAut{\pI}{\sM}^{\com},\; \QAut{\pB}{\sM}^{\com}, \;\QAut{\pF}{\sM}^{\com},\; \QAut{\pC}{\sM}^{\com}
	\end{equation*}
	  are all isomorphic to $C(\Aut(\sM))$. 
\end{proposition}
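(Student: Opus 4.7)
The plan is to invoke the Gelfand--Naimark correspondence cited in the text: any commutative quantum permutation group has the form $C(G)$ for a unique subgroup $G \leq \Sn{E}$, so the task reduces to identifying this subgroup for each of the four axiom systems and verifying that all four coincide with $\Aut(\sM)$.

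First I would set up the dictionary between the generators $u_{ij}$ and point-evaluation functions. Under the quotient $\QSn{E} \twoheadrightarrow \QSn{E}^{\com} \cong C(\Sn{E})$, the class of $u_{ij}$ becomes the indicator $\sigma \mapsto \delta_{i,\,\sigma(j)}$. For tuples $A, B \in E^k$ this extends multiplicatively to
\[
u_{AB}(\sigma) \;=\; \prod_{\ell=1}^{k} \delta_{a_\ell,\, \sigma(b_\ell)},
\]
which equals $1$ precisely when $\sigma$ sends $B$ to $A$ componentwise and $0$ otherwise. Then I would pass the defining relations of $\QAut{\pA}{\sM}$, for each $\pA \in \{\tupI(\sM), \tupB(\sM), \tupF(\sM), \tupC(\sM)\}$, through this dictionary: the generator $u_{AB}$ with $A \in \pA, B \notin \pA$ (or vice versa) vanishes in $\QAut{\pA}{\sM}^{\com}$ iff no surviving character $\sigma$ carries a non-$\pA$-tuple to a $\pA$-tuple or conversely. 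This yields $\QAut{\pA}{\sM}^{\com} \cong C(G_\pA)$, where $G_\pA$ is the setwise stabilizer in $\Sn{E(\sM)}$ of $\pA$ under the componentwise action on $\Tup{}{E(\sM)}$.

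Finally I would identify $G_\pA = \Aut(\sM)$ in each of the four cases. For $\pA = \tupB(\sM)$ this is essentially by definition: a permutation preserves basis tuples iff it permutes basis \emph{sets}, since the $r!$ orderings of a fixed basis $B$ all lie in $\tupB(\sM)$, and componentwise action on a tuple acts on its underlying set in the obvious way. For the other three I would appeal to cryptomorphy: the families $\pI(\sM), \pF(\sM), \pC(\sM)$ are each determined by, and determine, $\pB(\sM)$, so a permutation of $E(\sM)$ preserves one iff it preserves all. The passage from set-families to tuple-families is routine, since the only extra condition on $\pA$-tuples is the absence of repetitions (together with the auxiliary $(a,a)$ for non-loops in the circuit case), a condition obviously preserved by any permutation.

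The step I would handle with the most care is the initial identification $\QAut{\pA}{\sM}^{\com} \cong C(G_\pA)$: one must check that after commutativizing, the ideal $I(\QAut{\pA}{\sM})$ imposes exactly the vanishing of the indicator functions of the ``forbidden'' permutations and nothing more. Here the magic-unitary relations $u_{ik} u_{i\ell} = u_{kj} u_{\ell j} = 0$ for $k \neq \ell$, inherited from $I_E$, do the work: they already kill any $u_{AB}$ whose repetition pattern on the $A$-side disagrees with that on the $B$-side, so only honest bijective correspondences between $A$ and $B$ contribute and no classical characters are accidentally suppressed.
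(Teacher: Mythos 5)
Your proposal is correct and follows essentially the same route as the paper: both pass to the commutative quotient via the Gelfand--Naimark identification (the paper cites \cite[Exercise~1.10]{Freslon} with the evaluation characters $\varphi_\sigma$, which is exactly your indicator-function dictionary $u_{AB}(\sigma)=\prod_\ell \delta$), observe that $u_{AB}$ evaluates to $1$ precisely when $\sigma$ matches the tuples, and conclude that the resulting subgroup of $\Sn{E(\sM)}$ is the stabilizer of the relevant tuple family, hence $\Aut(\sM)$. The only cosmetic difference is that the paper writes out the independent-sets case and declares the others similar, whereas you dispatch the flats, circuits, and bases cases uniformly by cryptomorphy; the substance is the same.
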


Before proving this proposition, we establish some notation. Given a finite set $E$, denote by $\End{\C^{E}}$ the ring of linear maps $\C^{E} \to \C^{E}$. We view the symmetric group $\Sn{E}$ as the multiplicative subgroup of permutation matrices in $\End{\C^{E}}$. Given $\sigma\in \Sn{E}$ and $i,j\in E$, denote by $\sigma_{ij}$ the $(i,j)$--entry of the permutation matrix associated to $\sigma$, and define the homomorphism $\varphi_{\sigma}:\QSn{E} \to \C$ by $\varphi_{\sigma}(u_{ij}) = \sigma_{ij}$.

\begin{proof}[Proof of Proposition \ref{prop:recoverOrdinaryAut}]
We focus on the case $\QAut{\pI}{\sM}$, as the others are similar. Let $\QG = \QAut{\pI}{\sM}^{\com}$. By \cite[Exercise~1.10]{Freslon}, $\QG \cong C(G)$ where $G\leq \Sn{E}$ is the permutation group
\begin{equation*}
	G = \{\sigma \in \Sn{E} \, : \, \varphi_{\sigma}(f) = 0 \text{ for all  } f\in I(\QG)\}.
\end{equation*} 
We claim that $G = \Aut(\sM)$. 
Given tuples 
$A = (a_1,\ldots,a_k)$ and $B=(b_1,\ldots,b_k)$,  we have
\begin{equation}
\label{eq:varphisigma}
	\varphi_{\sigma}(u_{AB}) = \sigma_{a_1b_1} \cdots \sigma_{a_kb_k} = 
	\begin{cases}
		1 & \text{if } \sigma(A) = B, \\
		0 & \text{if } \sigma(A) \neq B.
	\end{cases}
\end{equation}
Suppose $\sigma \in \Aut(\sM)$ and $f\in I(\QG)$---without loss of generality we may assume that $f = u_{AB}$ where  $A$ is independent and $B$ is dependent. Since $\sigma$ is an automorphism of $\sM$, $\sigma(A)$ cannot equal  $B$, and so $\varphi_{\sigma}(u_{AB}) = 0$ by Equation  \eqref{eq:varphisigma}. Conversely, suppose $\sigma \in G$. If $A$ is independent, then $\sigma(A) \neq B$ for all dependent tuples $B$ by Equation \eqref{eq:varphisigma} and the definition of $I(\QG)$, and so $\sigma(A)$ is independent. Similarly, if $D = \sigma(A)$ is independent, then $\sigma^{-1}(D)\neq C$ for all dependent tuples $C$ (since $\sigma^{-1}\in G$), and so $A$ is independent. Therefore $\sigma\in \Aut(\sM)$, as required. 
\end{proof}

For the remainder of this section, we describe the relationship between the various quantum automorphism groups of matroids. 

\begin{theorem}
	\label{thm:QAutIeqQAutB}
	For  any matroid $\sM$, 
	\begin{equation*}
		\QAut{\pI}{\sM} = \QAut{\pB}{\sM}.
	\end{equation*}
\end{theorem}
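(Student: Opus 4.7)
The plan is to prove the two ideal inclusions $I(\QAut{\pI}{\sM}) \subseteq I(\QAut{\pB}{\sM})$ and $I(\QAut{\pB}{\sM}) \subseteq I(\QAut{\pI}{\sM})$ separately by showing that each generator of one ideal lies in the other. A recurring simplification I would use throughout is that if a tuple $B$ has a repeat $b_i = b_j$ while the corresponding tuple $A$ has $a_i \neq a_j$, then $u_{AB}$ contains the factor $u_{a_i b_i}u_{a_j b_i}$, which vanishes in $\QSn{E}$ by the defining relations in \eqref{eq:IE}. So, in every case below, I may reduce to considering generators $u_{AB}$ in which $A$ and $B$ both have no repeated entries; otherwise $u_{AB}$ is already $0$ in $\QSn{E}$.

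For the containment $I(\QAut{\pB}{\sM}) \subseteq I(\QAut{\pI}{\sM})$, a generator is $u_{AB}$ with one of $A,B$ a basis tuple and the other not. After the reduction above, say $A$ is a basis tuple (so $A \in \tupI(\sM)$, $|A|=r$) and $B$ is a length-$r$ tuple whose underlying set is dependent. Then $B \notin \tupI(\sM)$ while $A \in \tupI(\sM)$, so $u_{AB} \in I(\QAut{\pI}{\sM})$ directly. The symmetric case is analogous.

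For the reverse containment $I(\QAut{\pI}{\sM}) \subseteq I(\QAut{\pB}{\sM})$, a generator is $u_{AB}$ with, say, $A \in \tupI(\sM)$ and $B \notin \tupI(\sM)$. Writing $k = |A| = |B| \le r$, I may assume neither tuple has repeats, so the underlying set of $B$ is dependent. If $k = r$, then $A \in \tupB(\sM)$ and $B \notin \tupB(\sM)$ and we are done. Otherwise $k < r$, and the key move is to extend $A$ to a basis tuple $A' = (a_1,\ldots,a_k,a_{k+1},\ldots,a_r)$ using the augmentation property of independent sets, and then exploit Proposition \ref{prop:tupleCoproduct}(1) to write
\begin{equation*}
  u_{AB} \;=\; u_{AB} \cdot \prod_{j=k+1}^{r}\!\Big(\sum_{y_j \in E} u_{a_j y_j}\Big) \;=\; \sum_{y_{k+1},\ldots,y_r \in E} u_{A',\, B'},
\end{equation*}
where $B' = (B, y_{k+1},\ldots,y_r)$. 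Any such $B'$ contains $B$ as a sub-tuple, so its underlying multiset either has a repeat or contains the dependent set of $B$; in both cases $B' \notin \tupB(\sM)$. Since $A' \in \tupB(\sM)$, every summand $u_{A',B'}$ lies in $I(\QAut{\pB}{\sM})$, and hence so does $u_{AB}$.

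I do not anticipate a real obstacle here: the whole argument is driven by the observation that \emph{independence is a hereditary property while being a basis is not}, so length-$k$ independent tuples are exactly the sub-tuples of basis tuples. The main thing to be careful about is the bookkeeping when extending $A$ by several elements, and making sure that the extension step (which is purely a $\QSn{E}$-level identity coming from Proposition \ref{prop:tupleCoproduct}) is compatible with passing to the quotients $\QAut{\pI}{\sM}$ and $\QAut{\pB}{\sM}$.
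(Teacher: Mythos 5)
Your main argument is correct and is essentially the paper's: the containment $I(\QAut{\pB}{\sM})\subseteq I(\QAut{\pI}{\sM})$ follows from $\tupB(\sM)\subseteq\tupI(\sM)$ exactly as in the paper, and your one-shot extension of an independent tuple $A$ to a basis tuple $A'$ followed by inserting the row sums $\sum_{y\in E} u_{a_j y}=1$ is the same mechanism as the paper's Lemma~\ref{lem:inclusion-meta}, which the paper applies one length at a time to obtain the chain $\QG_{\tupI_{r}}\leq\cdots\leq\QG_{\tupI_{1}}$; performing all $r-k$ insertions at once is a harmless repackaging of the same idea.

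However, your ``recurring simplification'' is false as stated. In $\QSn{E}$ only \emph{adjacent} factors from the same row or column vanish ($u_{ik}u_{i\ell}=0$ and $u_{kj}u_{\ell j}=0$ for $k\neq\ell$); if $b_i=b_j$ for non-adjacent positions $i<j$, then $u_{AB}=u_{a_1b_1}\cdots u_{a_kb_k}$ contains $u_{a_ib_i}$ and $u_{a_jb_i}$ separated by other factors, and such a product need not be zero. Indeed, if your claim held, then for $a\neq c$ and $b\neq d$ the commutator trick from the lemma preceding Theorem~\ref{thm:flatsTrivial} would give $u_{ab}u_{cd}=\sum_{x\in E} u_{ab}u_{cd}u_{xb}=u_{ab}u_{cd}u_{ab}$, a self-adjoint element, forcing $\QSn{E}$ to be commutative for every $E$ --- false for $|E|\geq 4$. (Relations of this type are exactly what the matroid ideals add; compare the proof of Theorem~\ref{thm:girthgeq4}, where $u_{ab}u_{cd}u_{ax}=0$ is deduced from the independent-tuple relations, not from $\QSn{E}$ itself.) Fortunately the claim is never actually needed in your proof: a tuple with a repeated entry is by definition neither an independent tuple nor a basis tuple, so in the easy direction a repeated-entry $B$ already makes $u_{AB}$ a generator of $I(\QAut{\pI}{\sM})$, and in the extension step you only use that $B'$ fails to be a basis tuple, which you verify directly (it has a repeat or a dependent underlying set). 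Deleting the faulty reduction leaves a complete and correct proof that coincides with the paper's.
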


\noindent  We begin with the following technical lemma. Given a tuple $A$ and an element $x$, the tuple $\tupInsert{A}{x}{i}$ is obtained by inserting $x$ at position $i$. 

\begin{lemma}
\label{lem:inclusion-meta}
	Let $\pA_1, \pA_2\subseteq \Tup{}{E}$. Suppose that, for each pair of tuples $A = (a_1,\ldots,a_{\ell})$, $B = (b_1,\ldots,b_{\ell})$ of the same length with $A\in \pA_2$ and $B\notin \pA_2$, there is a $x\in E$ and $i\in \{1,\ldots,\ell\}$ such that $\tupInsert{A}{x}{i} \in \pA_1$,  but $\tupInsert{B}{y}{i} \notin \pA_1$ 	for all $y\in E$. Then $\QG_{\pA_1} \subgroup \QG_{\pA_2}$. 
\end{lemma}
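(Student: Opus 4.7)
The plan is to establish $\QG_{\pA_1} \leq \QG_{\pA_2}$ by verifying the reverse containment of ideals $I_{\pA_2} \subseteq I_{\pA_1}$. Since both ideals are generated by elements of the form $u_{CD}$ with one of $C, D$ in the respective subset and the other not, it suffices to take an arbitrary generator $u_{AB} \in I_{\pA_2}$ and exhibit it as an element of $I_{\pA_1}$. I will split into two symmetric cases depending on which of $A, B$ lies in $\pA_2$.

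The main case is $A \in \pA_2$, $B \notin \pA_2$, where $A, B$ have common length $\ell$. Apply the hypothesis to produce $x \in E$ and $i \in \{1, \ldots, \ell\}$ with $A' := \tupInsert{A}{x}{i} \in \pA_1$ and $\tupInsert{B}{y}{i} \notin \pA_1$ for every $y \in E$. The key move is to insert the identity $\sum_{y \in E} u_{xy} = 1$ (a row-sum relation from $I_E$) at the $i$-th slot of the product $u_{AB} = u_{a_1 b_1} \cdots u_{a_\ell b_\ell}$. Writing $B_y' := \tupInsert{B}{y}{i}$, this yields the identity
\[
u_{AB} \;=\; \sum_{y \in E}\, u_{a_1 b_1} \cdots u_{a_{i-1} b_{i-1}}\, u_{x y}\, u_{a_i b_i} \cdots u_{a_\ell b_\ell} \;=\; \sum_{y \in E} u_{A', B_y'}
\]
in $\QSn{E}$. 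Each summand $u_{A', B_y'}$ pairs $A' \in \pA_1$ with $B_y' \notin \pA_1$, hence is a defining generator of $I_{\pA_1}$, and so $u_{AB} \in I_{\pA_1}$ as required.

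For the remaining case $A \notin \pA_2$, $B \in \pA_2$, I would apply the hypothesis with the roles of $A$ and $B$ swapped, obtaining $x, i$ such that $\tupInsert{B}{x}{i} \in \pA_1$ and $\tupInsert{A}{y}{i} \notin \pA_1$ for all $y$. The same insertion trick works, but now using the column-sum relation $\sum_{y \in E} u_{yx} = 1$ instead; this expresses $u_{AB}$ as $\sum_y u_{A_y', B'}$ with $B' \in \pA_1$ and $A_y' \notin \pA_1$, again a sum of generators of $I_{\pA_1}$. The content of the proof is essentially contained in the one-line insertion identity; the only potential pitfall is the bookkeeping of the insertion position $i$ and the consistent use of either the row-sum or the column-sum relation, matched to whichever side of $u_{AB}$ the $\pA_2$-member sits on.
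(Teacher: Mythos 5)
Your proof is correct and follows essentially the same route as the paper's: reduce to showing $I_{\pA_2}\subseteq I_{\pA_1}$ and expand a generator $u_{AB}$ by inserting the relation $\sum_{y\in E}u_{xy}=1$ at the $i$-th slot, so that every summand is a generator of $I_{\pA_1}$. The only difference is that you spell out the case $A\notin\pA_2$, $B\in\pA_2$ explicitly via the column-sum relation, where the paper simply invokes \enquote{without loss of generality}; this is a harmless (indeed slightly more careful) elaboration of the same argument.
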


\begin{proof}
We must show that $I_{\pA_2} \subseteq I_{\pA_1}$. Suppose $u_{AB} \in I_{\pA_2}$, without loss of generality, we may assume that 
\begin{equation*}
	A = (a_1,\ldots,a_{\ell}) \in \pA_2 \hspace{5pt} \text{ and } \hspace{5pt} B = (b_1,\ldots,b_{\ell}) \notin \pA_2.
\end{equation*}
Let $x\in E$ and  $1\leq i\leq \ell$ be as in the lemma, and set $A' = (a_1,\ldots, a_{i-1})$, $A'' = (a_{i+1},\ldots, a_{\ell})$, $B' = (b_1,\ldots, b_{i-1})$, and $B'' = (b_{i+1},\ldots, b_{\ell})$. By the relation $\sum_{y\in E} u_{xy} =1$, we have
\begin{equation*}
	u_{AB} = \sum_{y\in E} u_{A'B'} u_{xy} u_{A''B''}
\end{equation*}
By hypothesis, each term in this sum lies in $I_{\pA_1}$, and therefore so does $u_{AB}$, as required. 
\end{proof}

\begin{proof}[Proof of Theorem \ref{thm:QAutIeqQAutB}]
For $1 \leq k < \rank(\sM)$, denote by $\tupI_k$ the set of independent tuples of length $k$. Suppose  $A\in \tupI_{k}(\sM)$ and $B\in E(\sM)^{k}\setminus \tupI_{k}$. Because $A$ is independent and not a basis, there is a $x\in E(\sM) \setminus A$ such that the tuple  $\tupInsert{A}{x}{k}$ is independent. Furthermore, because $B$ is dependent, the tuple $\tupInsert{B}{y}{k}$ is also dependent. By Lemma \ref{lem:inclusion-meta}, we have that $\QG_{\tupI_{k+1}} \leq \QG_{\tupI_{k}}$.   Thus, we get chain
\begin{equation*}
	 \QAut{\pB}{\sM} = \QG_{\tupI_r} \leq  \QG_{\tupI_{r-1}} \leq \cdots \leq \QG_{\tupI_1}. 
\end{equation*}
where $r=\rank(\sM)$, and hence $\QAut{\pB}{\sM} \subgroup \QAut{\pI}{\sM}$.  The inclusion $\QAut{\pI}{\sM} \subgroup \QAut{\pB}{\sM}$ follows from $\tupB(\sM) \subseteq \tupI(\sM)$.
\end{proof}

Next, we compare $\QAut{\pB}{\sM}$ and $\QAut{\pC}{\sM}$ for simple matroids $\sM$ (i.e., without loops or parallel elements) with rank 3.  We define a \textit{triangle} to be a triple of three rank-2 flats $\{F_1,F_2,F_3\}$ that pairwise intersect but $F_1 \cap F_2 \cap F_3 = \emptyset$.

\begin{theorem}
	\label{thm:QAutCInQAutB}
	Suppose $\sM$ is a simple, rank $3$ matroid. If $E(\sM)\neq F_1 \cup F_2 \cup F_3$ for all triangles $\{F_1,F_2,F_3\}$, then
	\begin{equation*}
	\QAut{\pC}{\sM} \leq \QAut{\pB}{\sM}.
	\end{equation*} 
\end{theorem}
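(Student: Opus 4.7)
The plan is to verify the hypothesis of Lemma \ref{lem:inclusion-meta} with $\pA_1 = \tupC(\sM)$ and $\pA_2 = \tupB(\sM)$. Given a basis tuple $A = (a_1, a_2, a_3)$ and a length-$3$ non-basis tuple $B = (b_1, b_2, b_3)$, I will produce $x \in E(\sM)$ and a position $i \in \{1, 2, 3\}$ such that $\tupInsert{A}{x}{i} \in \tupC(\sM)$ while $\tupInsert{B}{y}{i} \notin \tupC(\sM)$ for every $y \in E(\sM)$. Since $\sM$ has rank $3$, the relevant length-$4$ circuit tuples correspond to ordered $4$-element circuits.

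The witness $x$ comes from the triangle hypothesis. For each two-element subset $\{j, k\} \subseteq \{1, 2, 3\}$, let $L_{jk}$ denote the rank-$2$ flat spanned by $\{a_j, a_k\}$. Because $\{a_1, a_2, a_3\}$ is a basis of a simple rank-$3$ matroid, the three flats $L_{12}, L_{13}, L_{23}$ are pairwise distinct, each pair meets at exactly one of the $a_\ell$, and their triple intersection is empty; hence $\{L_{12}, L_{13}, L_{23}\}$ is a triangle. The hypothesis supplies some $x \in E(\sM) \setminus (L_{12} \cup L_{13} \cup L_{23})$, and then every $3$-subset of $\{a_1, a_2, a_3, x\}$ is independent, so this $4$-set is a $4$-circuit and $\tupInsert{A}{x}{i} \in \tupC(\sM)$ for any position $i$.

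To finish, I verify $\tupInsert{B}{y}{i} \notin \tupC(\sM)$ by cases on $B$. If $B$ has a repeated entry, so does every $\tupInsert{B}{y}{i}$, and length-$4$ circuit tuples require distinct entries. If $B$ has distinct entries, then since $B \notin \tupB(\sM)$ its underlying set $\{b_1, b_2, b_3\}$ is dependent, hence a $3$-circuit by simplicity. For any $y$, either $y \in \{b_1, b_2, b_3\}$ (producing a repeat) or $\{b_1, b_2, b_3, y\}$ properly contains the $3$-circuit $\{b_1, b_2, b_3\}$ and is therefore not itself a circuit. Either way the insertion is not in $\tupC(\sM)$, and Lemma \ref{lem:inclusion-meta} delivers $\QAut{\pC}{\sM} \leq \QAut{\pB}{\sM}$.

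The main obstacle is conceptual rather than computational: one is tempted to compare the two ideals at length $3$, where basis tuples and $3$-circuit tuples complement each other among length-$3$ tuples with distinct entries, making the comparison appear immediate. However, that route leaves the case where $B$ has repeated entries to be handled by ad hoc manipulations inside $\QSn{E}$ (it is not obvious that the corresponding $u_{AB}$ are realized by circuit relations). Lifting to length $4$ via Lemma \ref{lem:inclusion-meta} sidesteps this entirely, and the triangle hypothesis turns out to be precisely the condition guaranteeing that every basis extends to a $4$-circuit, providing the required witness $x$.
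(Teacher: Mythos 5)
Your proof is correct, and its mathematical core is the same as the paper's: the triangle hypothesis applied to the flats $\cl_{\sM}(\{a_2,a_3\})$, $\cl_{\sM}(\{a_1,a_3\})$, $\cl_{\sM}(\{a_1,a_2\})$ yields an element outside their union, so every basis tuple extends to a $4$-circuit tuple, and the relation $\sum_{y} u_{xy}=1$ absorbs the problematic generators into $I(\QAut{\pC}{\sM})$. The difference is in packaging: the paper proves this theorem by a direct case analysis on the dependent tuple $B$ (if $B$ is a circuit tuple, $u_{AB}$ is already a defining generator; if $B$ has adjacent repeats, $u_{AB}=0$ in $\QSn{E}$; only in the case $b_1=b_3$ does it perform the $4$-circuit expansion by hand), whereas you route everything through Lemma \ref{lem:inclusion-meta} --- which the paper states and proves but only invokes for Theorem \ref{thm:QAutIeqQAutB} --- verifying its hypothesis uniformly for all non-basis $3$-tuples $B$, including those that are themselves circuit tuples. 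Your route buys uniformity (no case split, since no length-$4$ insertion into a dependent or repeating $3$-tuple can be a circuit tuple in a simple matroid) and it makes explicit the check, left implicit in the paper, that the three rank-$2$ flats spanned by pairs of basis elements really do form a triangle, which is needed before the hypothesis can be applied; the paper's direct argument is marginally shorter because the easy cases are dispatched without any insertion. Both arguments use the hypothesis only through this one family of triangles, so the two proofs are equally general.
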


\noindent In the proof of this theorem, we make use of the \textit{closure} operator of a matroid $\sM$.  This is the function $\cl_{\sM}:2^{E(\sM)} \to \pF(\sM)$ defined by
\begin{equation*}
	\cl_{\sM}(A) = \{x\in E(\sM) \, : \, \rank_{\sM}(A\cup\{x\}) = \rank_{\sM}(A)\}.
\end{equation*}

\begin{proof}[Proof of Theorem \ref{thm:QAutCInQAutB}]
	We must show that $I(\QAut{\pB}{\sM}) \subseteq I(\QAut{\pC}{\sM})$. Suppose $A=(a_1,a_2,a_3)$ is a basis tuple and $B = (b_1,b_2,b_3)$ is dependent. If $B$ is a circuit tuple, then $u_{AB} \in I(\QAut{\pC}{\sM})$. Suppose that $B$ is not a circuit. Because $\sM$ is simple, the tuple $B$ must have repeated elements. If $b_1 = b_2$ or $b_2 = b_3$, then $u_{AB} \in I(\QSn{E}) \subseteq I(\QAut{\pC}{\sM})$. Finally, consider the case $b_1=b_3$. Let $F_1 = \cl(\{a_2,a_3\})$, $F_2 = \cl(\{a_1,a_3\})$, and $F_3 = \cl(\{a_1,a_2\})$. By hypothesis, there is a $a_4\in E(\sM) \setminus (F_1\cup F_2 \cup F_3)$, and therefore $(a_1,a_2,a_3,a_4)$ is a circuit tuple. So
	\begin{equation*}
		u_{AB} = u_{a_1b_1} u_{a_2b_2} u_{a_3b_1} = \sum_{x\in E} u_{a_1b_1} u_{a_2b_2} u_{a_3b_1} u_{a_4x}. 
	\end{equation*}
	Because $(b_1,b_2,b_1,x)$ is not a circuit tuple for each $x\in E$,  each summand on the right is in  $I(\QAut{\pC}{\sM})$, as required. 
\end{proof}

\begin{theorem}
\label{thm:flatsTrivial}
	For any matroid $\sM$, we have
	\begin{equation*}
		\QAut{\pF}{\sM} = C(\Aut(\sM)). 
	\end{equation*}
\end{theorem}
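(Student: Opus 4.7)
The plan is to deduce the theorem by showing that $\QAut{\pF}{\sM}$ is already commutative.  Once commutativity is established, Proposition \ref{prop:recoverOrdinaryAut} immediately yields $\QAut{\pF}{\sM} = \QAut{\pF}{\sM}^{\com} \cong C(\Aut(\sM))$, since for a commutative involutive algebra the canonical surjection onto the commutativization is an isomorphism.

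To prove commutativity, I would exploit three structural features that make flats considerably more rigid than the other axiom systems.  First, the ground set $E(\sM)$ is itself always a flat, so every non-repeating length-$|E(\sM)|$ tuple is automatically a flat tuple.  Second, every subset $A\subseteq E(\sM)$ has a well-defined closure $\cl_\sM(A)$ which is a flat, so for each pair $\{a,c\}$ there is a canonical flat $\cl_\sM(\{a,c\})$ containing it.  Third, the flats form a geometric lattice, which is especially rigid among atomistic lattices.  In contrast, the relations coming from bases, independent sets, and circuits concern collections of a fixed size or matroid rank; only the flat axiom imposes nontrivial constraints on tuples of every admissible length simultaneously.

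The crux is to show $[u_{ab}, u_{cd}] = 0$ in $\QAut{\pF}{\sM}$ for arbitrary $a,b,c,d \in E(\sM)$.  The cases $a = c$ or $b = d$ follow from the defining relations of $\QSn{E(\sM)}$.  For $a\neq c$ and $b\neq d$, I would consider the flat $F = \cl_\sM(\{a,c\})$ together with an ordering of $F$ whose first two entries are $a$ and $c$; this is a flat tuple.  Pairing it with a same-length tuple whose first two entries are $b$ and $d$ but whose underlying set is not a flat produces a relation $u_{AB} = 0$ in $\QAut{\pF}{\sM}$.  By then applying the sum-to-one identities $\sum_{e} u_{xe} = 1 = \sum_{e} u_{ex}$ to collapse the trailing positions (and using Proposition \ref{prop:tupleCoproduct}(1) to handle the resulting sums), one would hope to extract the commutator $[u_{ab}, u_{cd}]$ as a $\Z$-linear combination of such vanishing terms.

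The main obstacle is exactly this last step: turning a supply of vanishing products $u_{AB}$ into a pairwise commutation identity for the generators.  I expect this will require an induction on $\rank_{\sM}(\cl_\sM(\{a,c\}))$, where the covering relations of the geometric lattice drive the inductive step, or else a direct combinatorial identity that writes $u_{ab} u_{cd} - u_{cd} u_{ab}$ explicitly as a sum of elements of $I(\QAut{\pF}{\sM})$.  Either way, the feature that closure is encoded entirely in the lattice of flats must be used in an essential way, since the parallel arguments based on bases or circuits produce genuinely noncommutative quantum groups, as indicated by Theorems \ref{thm:QAutIeqQAutB} and \ref{thm:QAutCInQAutB}.
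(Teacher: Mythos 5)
Your overall strategy---prove that $\QAut{\pF}{\sM}$ is commutative and then invoke Proposition \ref{prop:recoverOrdinaryAut}---is exactly the paper's route, and your first structural observation (the ground set is a flat, so every non-repeating tuple of length $N=|E(\sM)|$ is a flat tuple) is precisely the fact the paper exploits. However, the proposal has a genuine gap: the commutativity argument is never carried out, and the mechanism you propose for generating relations would not in general supply them. You suggest pairing an ordering $A$ of $F=\cl_{\sM}(\{a,c\})$ with a same-length tuple $B$ starting with $b,d$ \emph{whose underlying set is not a flat}. Such a set need not exist: for instance if $\cl_{\sM}(\{a,c\})=E(\sM)$ (as happens for every pair in a rank-$2$ uniform matroid), every subset of that size is the ground set itself, which is a flat, so your pairing yields no relation at all. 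Moreover, even granting a supply of vanishing products, you explicitly leave open how to extract $[u_{ab},u_{cd}]=0$ from them (``one would hope to extract the commutator''), and the suggested induction on $\rank_{\sM}(\cl_{\sM}(\{a,c\}))$ is not substantiated; this extraction is the actual content of the theorem, since the analogous bases and circuits constructions are genuinely noncommutative.

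The missing ingredients are the following. The non-flat tuples you need come not from non-flat underlying sets but from \emph{repeated entries}: by definition a flat tuple has no repeats, so $u_{AB}\in I(\QAut{\pF}{\sM})$ whenever $A\in E(\sM)^{N}$ has no repeats (hence is a flat tuple, $E(\sM)$ being a flat) and $B\in E(\sM)^{N}$ has a repeat. The paper then proves a purely formal lemma about any $\QG_{\pA}$ with this property: a downward induction on tuple length, using $u_{AB}=\sum_{b\in E}u_{AB}u_{ab}$ for $a\notin A$, shows that $u_{AB}=0$ for \emph{all} lengths whenever $A$ has no repeats and $B$ does. Commutativity then follows from the identity
\begin{equation*}
u_{ab}u_{cd}=\sum_{x\in E}u_{ab}u_{cd}u_{xb}=u_{ab}u_{cd}u_{ab},
\end{equation*}
valid because the summands with $x\neq a,c$ vanish by the claim (the tuple $(b,d,b)$ has a repeat while $(a,c,x)$ does not) and the summand with $x=c$ vanishes by the relations of $\QSn{E}$; taking adjoints of $u_{ab}u_{cd}=u_{ab}u_{cd}u_{ab}$ gives $u_{cd}u_{ab}=u_{ab}u_{cd}u_{ab}$ and hence $[u_{ab},u_{cd}]=0$. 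Note that neither the closure operator nor the geometric-lattice structure is needed; only the single fact that $E(\sM)$ is a flat and that flat tuples are repetition-free enters the argument.
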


\noindent Suppose $N = |E(\sM)|$. Because $u_{AB} \in I(\QAut{\pF}{\sM})$ for $A,B\in E(\sM)^{N}$ such that $A$ does not have repeated elements and $B$ has repeated elements, this theorem is a direct consequence of Proposition \ref{prop:recoverOrdinaryAut} and the following  general lemma. 

\begin{lemma}
	Suppose $E$ is a set with $N$ elements and $\pA \subseteq \Tup{}{E}$. If $u_{AB}=0$ in $\QG_{\pA}$ for all $A,B\in E^{N}$ such that $A$ has no repeated elements and $B$ does have repeated elements, then $\QG_{\pA}$ is commutative.  
\end{lemma}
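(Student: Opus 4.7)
Plan: I would prove commutativity of $\QG_\pA$ by showing that every pair of generators commutes: $u_{ij} u_{kl} = u_{kl} u_{ij}$ for all $i,j,k,l\in E$. The cases with $i=k$ or $j=l$ follow immediately from the magic unitary relations of $\QSn{E}$, so I focus on $i\neq k$ and $j\neq l$.

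The first step is to derive from the hypothesis that, for every permutation tuple $A$ of $E$ of length $N$, the identity
\[
\sum_{B\text{ permutation tuple}} u_{AB} = 1
\]
holds in $\QG_\pA$: this follows from Proposition \ref{prop:tupleCoproduct}(1) upon discarding the vanishing terms with $B$ containing repeats. Equivalently, for every enumeration $(a_1,\ldots,a_N)$ of $E$ one has $\sum_{\sigma\in\Sn{E}} u_{a_1\sigma(a_1)} u_{a_2\sigma(a_2)}\cdots u_{a_N\sigma(a_N)} = 1$.

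Next, I would fix an enumeration $(c_3,\ldots,c_N)$ of $E\setminus\{i,k\}$ and expand $u_{ij}u_{kl}$ by multiplying on the right by $\prod_{m=3}^{N}(\sum_{b_m\in E}u_{c_m b_m}) = 1$. The resulting sum consists of length-$N$ products $u_{A,B}$ with $A = (i,k,c_3,\ldots,c_N)$ and $B = (j,l,b_3,\ldots,b_N)$. Since $A$ has no repeats, the hypothesis eliminates all terms with $B$ containing repeats, leaving
\[
u_{ij}u_{kl} = \sum_\pi u_{ij} u_{kl}\, u_{c_3\pi(c_3)}\cdots u_{c_N\pi(c_N)},
\]
where $\pi$ ranges over bijections $\{c_3,\ldots,c_N\}\to E\setminus\{j,l\}$. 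The analogous expansion for $u_{kl}u_{ij}$ has $u_{kl}u_{ij}$ in place of $u_{ij}u_{kl}$. Subtracting yields $[u_{ij},u_{kl}] = [u_{ij},u_{kl}]\cdot Q(j,l)$, where $Q(j,l) := \sum_\pi u_{c_3\pi(c_3)}\cdots u_{c_N\pi(c_N)}$; a dual argument with column sums inserted on the left gives $[u_{ij},u_{kl}] = Q'(i,k)\cdot[u_{ij},u_{kl}]$ for the analogous sum $Q'(i,k)$.

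The hardest step is to conclude $[u_{ij},u_{kl}] = 0$ from these absorption identities, since $Q(j,l)$ and $Q'(i,k)$ are not equal to $1$ a priori. I expect this step to require inserting further row- and column-sum factors at intermediate positions inside the product, each time applying the hypothesis to length-$N$ tuples placing $(i,k)$ and $(j,l)$ in different positions, and combinatorially matching terms in the resulting double sums. The technical heart of the proof is this combinatorial matching, exploiting the interaction between the hypothesis, the magic unitary relations of $\QSn{E}$, and the structure of the partial sums $Q(j,l)$ and $Q'(i,k)$.
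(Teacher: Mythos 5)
Your setup is fine as far as it goes (the identities $u_{ij}u_{kl}=\sum_\pi u_{ij}u_{kl}u_{c_3\pi(c_3)}\cdots u_{c_N\pi(c_N)}$ and the resulting absorption $[u_{ij},u_{kl}]=[u_{ij},u_{kl}]\,Q(j,l)$ are correct), but the proof is not complete: the step you yourself flag as "the hardest" is exactly the content of the lemma, and you give no argument for it. Absorption by the element $Q(j,l)$, which is a sum over $(N-2)!$ monomials and is not $1$ in $\QG_{\pA}$ a priori, does not by itself force $[u_{ij},u_{kl}]=0$, and your plan to "insert further row- and column-sum factors and combinatorially match terms" is a hope, not a proof. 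So as written this is a genuine gap.

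The missing idea is to get away from full-length tuples. First extend the hypothesis downward: by descending induction on the tuple length, if $A$ has no repeats and $B$ has a repeat then $u_{AB}=0$ for tuples of \emph{every} length $\leq N$, because for $a\notin A$ one has $u_{AB}=\sum_{b\in E}u_{AB}u_{ab}$ and each summand is a longer such product. With this in hand you only ever need length-$3$ tuples: for $a\neq c$, $b\neq d$, insert a single column sum, $u_{ab}u_{cd}=\sum_{x\in E}u_{ab}u_{cd}u_{xb}$; the terms with $x\neq a,c$ vanish because the row tuple $(a,c,x)$ has no repeats while the column tuple $(b,d,b)$ does, and the term $x=c$ vanishes by the magic-unitary relations. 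This leaves the absorption by a \emph{single generator}, $u_{ab}u_{cd}=u_{ab}u_{cd}u_{ab}$, whose right-hand side is self-adjoint, so taking adjoints gives $u_{cd}u_{ab}=u_{ab}u_{cd}u_{ab}=u_{ab}u_{cd}$. Note the trick is to repeat the column index $b$ in the inserted factor (forcing a repeat in the column tuple), rather than summing over fresh columns as in your expansion, which is why your version never collapses to a usable identity.
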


\begin{proof}
	Let $0\leq k \leq N-1$. First, we claim that $u_{AB} = 0$ in $\QG_{\pA}$ for all $A,B \in E^{N-k}$ such that $A$ has no repeated elements and $B$ does have repeated elements. We proceed by induction on $k$, and the base case $k=0$ is a hypothesis of the lemma. For $k\geq 1$, suppose $A$ and $B$ are as above and have $N-k$ elements. If  $a\notin A$ then 
	\begin{equation*}
		u_{AB} = \sum_{b\in E}u_{AB} u_{ab}.
	\end{equation*}
	As each summand on the right lies in $I_{\pA}$ by the inductive hypothesis, we also have $u_{AB}=0$ in $ \QG_{\pA}$, as required. 
	
	Now, suppose $a,b,c,d\in E$ and consider $u_{ab}u_{cd}$. If $a=c$ or $b=d$, then $u_{ab}u_{cd}$ and $u_{cd}u_{ab}$  both equal 0 or 1, so assume $a\neq c$ and $b\neq d$. Then
	\begin{equation*}
		u_{ab}u_{cd} = \sum_{x\in E} u_{ab}u_{cd}u_{xb}.
	\end{equation*} 
	For $x\neq a,c$, the summand $u_{ab}u_{cd}u_{xb}$ lies in $I_{\pA}$ by the above claim, and  summand $u_{ab}u_{cd}u_{cb}$ equals 0 by the quantum symmetric group relations. Therefore
	\begin{equation*}
		u_{ab}u_{cd} = u_{ab}u_{cd}u_{ab}. 
	\end{equation*}
	As the term on the right is idempotent, we have $u_{ab}u_{cd} = u_{cd}u_{ab}$, and so $\QG_{\pA}$ is commutative, as required. 
	
\end{proof}

\section{Quantum symmetries for matroids by rank and girth}
\label{sec:QuantumSymmetriesByRank}

We begin this section by developing general descriptions of the quantum automorphism group $\QAut{\pB}{\sM}$ for matroids of rank $1$ and $2$. Given a matroid $\sM$ and $L\subseteq E(\sM)$, the \textit{deletion} of $L$ from $\sM$ is the matroid  $\sM\setminus L$ ground set $E(\sM) \setminus L$ and whose independent sets are $\pI(\sM\setminus L) = \{A\in \pI(\sM) \, : \, A\subseteq E(\sM) \setminus L\}$. 

\begin{proposition}
\label{prop:loopsFreeProduct}
Let $\sM$ be a matroid and denote by $L\subseteq E(\sM)$ the set of loops of $\sM$.  Then
    \begin{equation*}
		\QAut{\pI}{\sM}  \cong \freeprod{\QSn{L}}{\QAut{\pI}{\sM \setminus L}} {}
	\end{equation*}
	If $\sM$ has rank 1, then 
	\begin{equation*}
		\QAut{\pI}{\sM}  \cong \freeprod{\QSn{L}}{\QSn{E(\sM)\setminus L}}{}.
	\end{equation*}
\end{proposition}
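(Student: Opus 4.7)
The plan is to exploit the fact that $I(\QAut{\pI}{\sM})$ already distinguishes loops from non-loops: a single element $x\in E(\sM)$ satisfies $(x)\in\tupI(\sM)$ precisely when $x$ is a non-loop. Consequently, for every $x\in L$ and $y\in E(\sM)\setminus L$, both $u_{xy}$ and $u_{yx}$ belong to $I(\QAut{\pI}{\sM})$. These are exactly the cross-generators occurring in the free-product ideal on the right-hand side, so once both inclusions of ideals inside $\C\langle E(\sM)^{2}\rangle$ are verified the identity on generators will induce the claimed isomorphism.

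For the inclusion $I(\freeprod{\QSn{L}}{\QAut{\pI}{\sM\setminus L}}{})\subseteq I(\QAut{\pI}{\sM})$, the cross-generators are handled above. Among the generators of $I_{L}$ and $I_{E(\sM)\setminus L}$ from \eqref{eq:IE}, the idempotent relations $u_{ij}^{2}=u_{ij}$ and the orthogonality relations already hold inside $\QSn{E(\sM)}$, and the smaller row/column sums $\sum_{k\in L}u_{kj}=1$ (for $j\in L$) follow from $\sum_{k\in E(\sM)}u_{kj}=1$ after the cross-generators kill the remaining terms; the analogous argument handles $E(\sM)\setminus L$. Finally, every tuple with entries in $E(\sM)\setminus L$ lies in $\tupI(\sM\setminus L)$ if and only if it lies in $\tupI(\sM)$, so each defining generator of $I(\QAut{\pI}{\sM\setminus L})$ is also a defining generator of $I(\QAut{\pI}{\sM})$.

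For the reverse inclusion, take a defining generator $u_{AB}$ of $I(\QAut{\pI}{\sM})$ and assume, say, $A\in\tupI(\sM)$ and $B\notin\tupI(\sM)$ (the opposite sign is treated identically). Since $A$ is independent, it consists of distinct non-loops. If some $b_{i}$ lies in $L$, the factor $u_{a_{i}b_{i}}$ is a cross-generator and hence $u_{AB}$ already belongs to the free-product ideal. Otherwise every entry of $B$ is a non-loop, both tuples sit in $(E(\sM)\setminus L)^{k}$, the condition $B\notin\tupI(\sM)$ becomes $B\notin\tupI(\sM\setminus L)$, and $u_{AB}$ lies in $I(\QAut{\pI}{\sM\setminus L})$.

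The rank-$1$ statement then reduces to showing $\QAut{\pI}{\sM\setminus L}\cong\QSn{E(\sM)\setminus L}$. In a loopless rank-$1$ matroid every pair of elements is parallel, so $\tupI(\sM\setminus L)$ consists of the empty tuple together with every singleton $(a)$, $a\in E(\sM)\setminus L$, and nothing of greater length. At each length $k$ the set $(E(\sM)\setminus L)^{k}$ is therefore either entirely inside $\tupI(\sM\setminus L)$ (when $k\leq 1$) or entirely outside, so no generators beyond those of $I_{E(\sM)\setminus L}$ are added. The main obstacle is the mixed case in the third paragraph: one must recognize that even a single loop entry in $B$ absorbs the full product $u_{AB}$ into the cross-generator part of the free-product ideal, because the ideal is two-sided and a single factor in it kills the entire word.
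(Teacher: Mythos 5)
Your proposal is correct and follows essentially the same route as the paper: the key point in both is that a singleton $(x)$ is an independent tuple exactly when $x$ is a non-loop, so the cross-generators $u_{xy}$, $u_{yx}$ for $x\in L$, $y\notin L$ lie in $I(\QAut{\pI}{\sM})$, from which the equality of the two defining ideals (and, in rank $1$, the identification $\QAut{\pI}{\sM\setminus L}=\QSn{E(\sM)\setminus L}$) follows. The paper states only this one-line observation and leaves the two ideal inclusions and the rank-$1$ case implicit, so your write-up is simply a more detailed rendering of the same argument.
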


\begin{proof}
Recall that $i\in E(\sM)$ is a loop of $\sM$ if $\{i\}$ is a dependent set. So  this proposition follows from the fact that $u_{ij} \in I(\QAut{\pI}{\sM})$ if $i\in L$  and $j\in E(\sM)\setminus L$, or $i\in E(\sM)\setminus L$  and $j\in L$. 
\end{proof}

Now suppose that $\sM$ is a loopless rank 2 matroid. The rank 1 flats form a partition of $E(\sM)$. Define a graph $\Gamma_2[\sM]$ in the following way. Let $V(\Gamma_2[\sM]) = E(\sM)$, and two vertices are connected by an edge if they lie in different rank-1 flats. 
\begin{proposition}
\label{prop:rank2BasesQAut}
	Let $\sM$ be a rank 2 matroid and let $L\subseteq E(\sM)$ be its set of loops. Then
	\begin{equation*}
		\QAut{\pB}{\sM} \cong \freeprod{\QSn{L}}{\;\QAut{}{\Gamma_2[\sM\setminus L] }}{}.
	\end{equation*}
\end{proposition}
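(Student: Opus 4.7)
The plan is first to strip away the loops and then to identify the defining relations that remain with those of the quantum automorphism group of the graph $\Gamma_2[\sM \setminus L]$. By Theorem \ref{thm:QAutIeqQAutB}, the groups $\QAut{\pI}{\sM}$ and $\QAut{\pB}{\sM}$ coincide, so Proposition \ref{prop:loopsFreeProduct} yields
\[
\QAut{\pB}{\sM} \;\cong\; \freeprod{\QSn{L}}{\QAut{\pB}{\sM \setminus L}}{}.
\]
Consequently, it suffices to prove the isomorphism $\QAut{\pB}{\sM'} \cong \QAut{}{\Gamma_2[\sM']}$ for the loopless rank-$2$ matroid $\sM' := \sM \setminus L$. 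I would therefore assume for the remainder of the argument that $\sM$ itself is loopless of rank $2$.

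The key observation in the reduced case is that the bases of $\sM$ are exactly the $2$-subsets of $E(\sM)$ that meet two distinct rank-$1$ flats, and by construction of $\Gamma_2[\sM]$ these are exactly the edges. Lifting to tuples, the set $\tupB(\sM)$ is in bijection with the ordered edges of $\Gamma_2[\sM]$. Thus the ideal $I(\QAut{\pB}{\sM})$ is generated on top of the magic unitary relations $I_E$ by the products $u_{a_1b_1}u_{a_2b_2}$ for which exactly one of the length-$2$ tuples $(a_1,a_2)$ and $(b_1,b_2)$ is an ordered edge of $\Gamma_2[\sM]$.

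The final step is to match this presentation with the standard one for $\QAut{}{\Gamma_2[\sM]}$. I would split the generators of $I(\QAut{\pB}{\sM})$ according to whether the ``non-basis'' tuple has a repeated entry. In the repeated case (say $b_1 = b_2$ with $(a_1, a_2)$ an ordered edge and $a_1 \neq a_2$), the relation $u_{a_1b_1}u_{a_2b_1} = 0$ is already a consequence of the magic unitary axioms in $I_E$, and symmetrically for $a_1=a_2$. The remaining generators are exactly the edge-preservation relations $u_{a_1b_1}u_{a_2b_2} = 0$ with $a_1 \neq a_2$, $b_1 \neq b_2$, and exactly one of $\{a_1,a_2\}$, $\{b_1,b_2\}$ an edge of $\Gamma_2[\sM]$; this is the customary presentation of the quantum automorphism group of a simple graph inside $\QSn{V(\Gamma)}$. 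The main obstacle I anticipate is bookkeeping this identification under the paper's chosen convention for $\QAut{}{\Gamma}$: depending on whether Bichon's or Banica's definition is used, one may need to invoke the classical equivalence between the adjacency-matrix commutation relations and the edge-preservation relations inside $\QSn{V(\Gamma)}$ for simple graphs, but once that is in place the two ideals agree on the nose.
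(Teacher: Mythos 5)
Your proposal is correct and follows essentially the same route as the paper: reduce to the loopless case via Proposition \ref{prop:loopsFreeProduct} (together with Theorem \ref{thm:QAutIeqQAutB}, which the paper leaves implicit), identify the basis tuples of a loopless rank-$2$ matroid with the ordered edges of $\Gamma_2[\sM]$, and match the remaining generators with the edge-preservation presentation of the graph quantum automorphism group. The convention issue you flag is resolved exactly as you anticipate: the paper cites the presentation of $\QAut{}{\Gamma}$ by the relations $u_{ab}u_{cd}=0$ for mismatched adjacency (\cite[Lemma~5.7]{SpeicherWeber}), so the two ideals agree on the nose.
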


\begin{proof}
	Given a graph $\Gamma$, by \cite[Lemma~5.7]{SpeicherWeber} its quantum automorphism group is the subgroup of $\QSn{V(\Gamma)}$ defined by the ideal
	\begin{equation*}
		\langle u_{ab}u_{cd} \, : \, (ac \in E(\Gamma) \text{ and } bd \notin E(\Gamma) )\text{ or } (ac \notin E(\Gamma) \text{ and } bd \in E(\Gamma) ) \rangle 
	\end{equation*}
	If $\sM$ is loopless, then the proposition follow from this description of $\QAut{}{\Gamma_2[\sM]}$ and the definition of $\QAut{\pB}{\sM}$. For the general case, apply Proposition \ref{prop:loopsFreeProduct}.
\end{proof}

\noindent Therefore, $\QAut{\pB}{\sM}$ are familiar quantum automorphism groups for matroids of rank 1 and 2. At the other end of the rank spectrum, we develop criterion which guarantees  that $\QAut{\pB}{\sM}$ is commutative for large classes of matroids of rank $\geq 4$. The \textit{girth} of the matroid $\sM$, denoted $\girth(\sM)$, is the size of the smallest circuit of $\sM$.  
\begin{theorem}
	\label{thm:girthgeq4}
	If $\sM$ is a matroid with $\girth(\sM) \geq 4$, then 
	\begin{equation*}
		\QAut{\pB}{\sM} = \QAut{\pI}{\sM} = C(\Aut(\sM)). 
	\end{equation*}
\end{theorem}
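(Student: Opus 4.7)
\emph{Proof plan.} The first move is to reduce away two of the three claimed equalities. By Theorem~\ref{thm:QAutIeqQAutB} we already have $\QAut{\pB}{\sM} = \QAut{\pI}{\sM}$, and by Proposition~\ref{prop:recoverOrdinaryAut} the remaining identification $\QAut{\pI}{\sM} = C(\Aut(\sM))$ will follow as soon as $\QAut{\pI}{\sM}$ is commutative. So the entire theorem reduces to proving the single identity $u_{ab} u_{cd} = u_{cd} u_{ab}$ in $\QAut{\pI}{\sM}$ for all $a, b, c, d \in E(\sM)$.

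The one structural input I plan to extract from the girth hypothesis is this: if $\girth(\sM) \geq 4$, then every subset of $E(\sM)$ of size at most three is independent, so every triple $(x_1, x_2, x_3)$ with pairwise distinct entries is an independent tuple. Equivalently, whenever $A$ is a length-three tuple of distinct entries and $B$ is a length-three tuple with a repeated entry, $u_{AB}$ lies in $I(\QAut{\pI}{\sM})$. This is the only fact about $\sM$ that will enter beyond the standard $\QSn{E(\sM)}$ relations.

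With that in hand, the plan is to mimic the self-adjoint idempotent trick used at the end of the proof of Theorem~\ref{thm:flatsTrivial}. The degenerate cases $a = c$ or $b = d$ are handled directly by the quantum permutation group relations and give commutativity on the nose. For $a \neq c$ and $b \neq d$, I would insert the column-sum identity $1 = \sum_{x \in E(\sM)} u_{xb}$ to the right of $u_{ab} u_{cd}$ and analyse the resulting sum summand by summand: the $x = c$ summand vanishes by the $\QSn{E(\sM)}$ relation $u_{cd} u_{cb} = 0$; for $x \notin \{a, c\}$, the triple $(a, c, x)$ is an independent tuple by the girth hypothesis while $(b, d, b)$ has a repeated entry, so $u_{ab} u_{cd} u_{xb} = u_{(a,c,x)(b,d,b)}$ lies in $I(\QAut{\pI}{\sM})$. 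Only the $x = a$ summand survives, giving
\begin{equation*}
  u_{ab} u_{cd} = u_{ab} u_{cd} u_{ab}.
\end{equation*}
Because each $u_{ij}$ is self-adjoint, the right-hand side is self-adjoint, so $u_{ab} u_{cd}$ equals its own adjoint $u_{cd} u_{ab}$, which is the desired commutativity.

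I do not anticipate a serious obstacle: the argument only requires that every length-three tuple of distinct entries of $E(\sM)$ be an independent tuple, which is exactly what the girth hypothesis delivers, and the rest is the same self-adjoint manipulation already used in \S\ref{sec:QuantumAutomorphismsAxioms}. The mild edge cases $|E(\sM)| \leq 3$ are automatic since $\QSn{E(\sM)}$ itself is then commutative. Combined with Proposition~\ref{prop:recoverOrdinaryAut}, commutativity closes the chain of equalities.
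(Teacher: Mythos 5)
Your proposal is correct and follows essentially the same route as the paper: reduce via Theorem~\ref{thm:QAutIeqQAutB} and Proposition~\ref{prop:recoverOrdinaryAut} to showing $u_{ab}u_{cd}=u_{ab}u_{cd}u_{ab}$, handle the degenerate cases by the $\QSn{E}$ relations, and in the main case insert a sum of generators, kill all but one summand using that girth $\geq 4$ makes every three-element tuple of distinct entries independent while a tuple with a repeat is not, and conclude by self-adjointness. The only (immaterial) difference is that you insert the column sum $\sum_x u_{xb}$, placing the girth hypothesis on the tuples $(a,c,x)$, whereas the paper inserts the row sum $\sum_x u_{ax}$ and applies it to $(b,d,x)$.
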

\noindent A rank $r$ matroid is \textit{paving} if its girth is $\geq r$.  Conjecturally, almost all matroids are paving \cite{CrapoRota, MayhewNewmanWelshWhittle}, and paving matroids of rank $\geq 4$ have commutative bases quantum automorphism group by Theorem \ref{thm:girthgeq4}. This leaves rank $r=3$, and there are several examples of matroids $\sM$ such that $\QAut{\pB}{\sM}$ is noncommutative, see \S\ref{sec:computations}.

\begin{proof}[Proof of Theorem \ref{thm:girthgeq4}]
    In view of Theorem \ref{thm:QAutIeqQAutB},  we may focus on proving the second equality in the theorem.  By Proposition \ref{prop:recoverOrdinaryAut}, it suffices to show that, for a matroid $\sM$ with $\girth(\sM) \geq 4$, we have
\[
    u_{ab}u_{cd}u_{ab} = u_{ab}u_{cd} \; \text{ in } \;  \QAut{\pI}{\sM}
\]
for all $a,b,c,d \in E(\sM)$. Indeed, this formula implies that $u_{ab}$ and $u_{cd}$ commute since $u_{ab}u_{cd}u_{ab}$ is self-adjoint.  

\noindent \textbf{Case 1}.
If  $a = c$ and $b=d$, then
\[
    u_{ab}u_{cd}u_{ab} = u_{ab}^2u_{ab} = u_{ab}u_{cd}.
\]
\noindent \textbf{Case 2}. If $a \neq c$ and $b=d$   then
\[
    u_{ab}u_{cd}u_{ab} = u_{ab}u_{cb}u_{ab} = 0 = u_{ab}u_{cb}.
\]
\noindent \textbf{Case 3}. If $a = c$ and $b\neq d$, argue as in Case 2. 

\medskip 

\noindent \textbf{Case 4}. Suppose $a \neq c$ and $b \neq d$. Then
\[
    u_{ab}u_{cd} = u_{ab}u_{cd}\sum_{x \in E }u_{ax}.
\]
We must show that $u_{ab}u_{cd}u_{ax}=0$ in $\QAut{\pI}{\sM}$ for every $x \in E(\sM)\setminus \{b\}$. Since $(a,c,a)$ has  repeated elements, it is not an independent tuple. On the other hand, $(b,d,x)$ is an independent tuple for $x\neq b,d$ by the hypothesis $\girth(\sM) \geq 4$. If $x = d$, then
\[
    u_{ab}u_{cd}u_{ax} = u_{ab}u_{cd}u_{ad} = 0
\]
since $a \neq c$. In either case, we have 
\[
    u_{ab}u_{cd}u_{ax} = 0 \;\text{ in }\; \QAut{\pI}{\sM}  \; \text{ for } \; x\in E(\sM)\setminus \{b\}
\] 
as required.
\end{proof}

\section{Lov\'{a}sz's theorem for matroids}
\label{sec:Lovasz}

So far our main purpose was to introduce and study interesting new classes of noncommutative algebras, namely the various quantum automorphism groups of matroids.
But it is natural to ask if we can also learn something about matroids from their quantum automorphisms.
While we currently lack the technology to obtain a deep result in that direction. Here we want to sketch an idea that we find intriguing.

Our point of departure is a celebrated result in graph theory.
Lov\'asz in \cite{Lovasz} derives the following characterization of isomorphic graphs in terms of graph homomorphism counts. 
\begin{theorem}
	\label{thm:Lovasz}
	Two graphs $\Gamma_1$ and $\Gamma_2$, without multiple edges, are isomorphic if and only if
	\begin{equation*}
	|\Hom(\Gamma, \Gamma_1)| = |\Hom(\Gamma,\Gamma_2)|
	\end{equation*}
	for all graphs $\Gamma$. 
\end{theorem}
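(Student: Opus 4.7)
The plan is to follow Lov\'asz's classical strategy from \cite{Lovasz}: reduce from homomorphism counts to injective homomorphism counts via M\"obius inversion, then use injective maps in both directions to extract a genuine isomorphism. The forward implication is immediate, because any isomorphism $\Gamma_1 \cong \Gamma_2$ induces a bijection $\Hom(\Gamma, \Gamma_1) \to \Hom(\Gamma, \Gamma_2)$ by post-composition, so I focus on the nontrivial direction.

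For a finite set $V$, let $\Pi(V)$ denote the partition lattice of $V$. Every homomorphism $f : \Gamma \to \Delta$ has an associated fiber-partition $P_f$ on $V(\Gamma)$, and the induced map on the quotient $\Gamma/P_f$ is injective. Since $\Delta$ is simple, only partitions in which no block contains two adjacent vertices of $\Gamma$ arise this way; for such $P$ the quotient $\Gamma/P$ is a well-defined simple graph. Sorting homomorphisms by their fiber-partitions gives
\begin{equation*}
|\Hom(\Gamma, \Delta)| \;=\; \sum_{P} |\operatorname{Inj}(\Gamma/P, \Delta)|,
\end{equation*}
where $P$ ranges over partitions in $\Pi(V(\Gamma))$ with no edge inside a block and $\operatorname{Inj}$ counts injective homomorphisms. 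This is a triangular system with respect to refinement of partitions, so by M\"obius inversion on $\Pi(V(\Gamma))$, the injective counts can be recovered from the homomorphism counts. Consequently, the hypothesis propagates: $|\operatorname{Inj}(\Gamma, \Gamma_1)| = |\operatorname{Inj}(\Gamma, \Gamma_2)|$ for every graph $\Gamma$.

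To finish, take $\Gamma = K_1$ to conclude $|V(\Gamma_1)| = |V(\Gamma_2)| =: n$, and then take $\Gamma = \Gamma_1$. Since $|\operatorname{Inj}(\Gamma_1, \Gamma_1)| \geq |\Aut(\Gamma_1)| \geq 1$, we have $|\operatorname{Inj}(\Gamma_1, \Gamma_2)| \geq 1$, producing an injective homomorphism $f : \Gamma_1 \to \Gamma_2$. Because $|V(\Gamma_1)| = |V(\Gamma_2)|$, such an $f$ is a vertex bijection that maps edges to edges, so $|E(\Gamma_1)| \leq |E(\Gamma_2)|$. Swapping the roles of $\Gamma_1$ and $\Gamma_2$ gives the reverse inequality, hence equality of edge counts, which forces $f$ to be a bijection on edges too, i.e., an isomorphism.

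The main obstacle is the combinatorial bookkeeping in the M\"obius inversion step: one must correctly identify the subposet of $\Pi(V(\Gamma))$ consisting of partitions whose blocks are independent sets in $\Gamma$, and check that restricting the inversion to this subposet still yields an invertible triangular system. The downstream arguments---comparing vertex counts via $K_1$, extracting injections in both directions, and promoting them to an isomorphism---are then elementary counting.
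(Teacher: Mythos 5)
Your argument is correct, and it is the classical Lov\'asz proof: the fiber-partition decomposition $|\Hom(\Gamma,\Delta)|=\sum_P|\operatorname{Inj}(\Gamma/P,\Delta)|$ over partitions with independent blocks is unitriangular with respect to coarsening, so inverting it (equivalently, inducting on $|V(\Gamma)|$) transfers the hypothesis from homomorphism counts to injective counts, and your finish via $K_1$, an injection $\Gamma_1\to\Gamma_2$, and the two-sided edge-count comparison is sound. Note, however, that the paper does not prove Theorem \ref{thm:Lovasz} at all---it is quoted from Lov\'asz as background---so the relevant comparison is with the paper's proof of its matroidal analog, Theorem \ref{thm:matroidLovasz_v2}, which runs along a structurally different line: instead of passing to injective counts by M\"obius inversion over the partition lattice, the paper factors each map as a surjection onto its image followed by an embedding and counts these factorizations up to automorphisms of the image (Lemma \ref{lem:decomposeHom}, $|\Hom(\sM_1,\sM_2)|=\sum_{\sN}|\Surj(\sM_1,\sN)||\Emb(\sN,\sM_2)|/|\Aut(\sN)|$), then inducts on the size of the target's ground set to equate \emph{surjection} counts and concludes via surjective strong maps in both directions (Lemma \ref{lem:strongMapInjectiveIso_v2}). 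Your route leans on quotienting the source by independent-set partitions, which is natural for graphs but has no direct matroid counterpart; the paper's surjection--embedding factorization is the version that survives the passage to strong maps of matroids, where the counting runs \emph{from} the candidate objects rather than \emph{to} them. One small stylistic remark: you do not really need the full M\"obius-inversion machinery on the subposet---the same induction on the number of vertices of the source, subtracting the contributions of proper quotients, gives the transfer to injective counts with less bookkeeping.
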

\noindent Exciting recent work of Mancinska and Roberson \cite{MancinskaRoberson} characterizes \textit{quantum} isomorphic graphs in terms of homomorphism counts from \textit{planar} graphs.
\begin{theorem}
	\label{thm:quantumLovasz}
	Two graphs $\Gamma_1$ and $\Gamma_2$, without multiple edges, are quantum isomorphic if and only if
	\begin{equation*}
	|\Hom(\Gamma, \Gamma_1)| = |\Hom(\Gamma,\Gamma_2)|
	\end{equation*}
	for all planar graphs $\Gamma$. 
\end{theorem}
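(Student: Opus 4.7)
The plan is to reproduce the strategy of Mancinska and Roberson. The forward implication (quantum isomorphism $\Rightarrow$ equal planar homomorphism counts) is the easier direction: a magic unitary $P$ with entries in some $C^*$-algebra intertwining the adjacency matrices $A_1$ and $A_2$ of $\Gamma_1$ and $\Gamma_2$ lets one transport a tensor-network expression for $|\Hom(\Gamma,\Gamma_i)|$, written as a scalar-valued contraction of copies of $A_i$ along the edges of $\Gamma$, from $i=1$ to $i=2$. The magic unitary relations together with $PA_1 = A_2P$ allow the $P$'s to be cancelled in pairs provided the rewiring stays planar. Planarity is essential here: without it one would need braidings that the category of magic unitaries does not admit.

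For the reverse implication, the strategy is to package the intertwiner data into a rigid $C^*$-tensor category $\mathcal{C}_i$ generated, inside the planar algebra attached to $\QSn{N}$, by the adjacency matrix $A_i$. Planar homomorphism counts $|\Hom(\Gamma,\Gamma_i)|$ are precisely the dimensions of morphism spaces $\Hom_{\mathcal{C}_i}(\mathbf{1},X_\Gamma)$ for suitable objects $X_\Gamma$ parametrized by planar graphs. Standard planar-algebra combinatorics together with Frobenius reciprocity then promote the equality of all such dimensions into a monoidal equivalence $\mathcal{C}_1 \cong \mathcal{C}_2$.

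The main obstacle, and the technical heart of the argument, is a Tannaka--Krein style reconstruction step that translates this abstract categorical equivalence into a concrete magic unitary intertwining $A_1$ and $A_2$. Here one invokes Woronowicz's Tannaka--Krein duality for compact matrix quantum groups applied to the subcategory generated by $A_1$ and $A_2$, and one must exhibit a faithful state on the resulting universal $C^*$-algebra that realizes the intertwiners correctly; rigidity and duality for the planar algebra have to be checked explicitly. Once such a magic unitary is in hand, the known characterization of quantum isomorphism of graphs via magic unitaries satisfying $PA_1 = A_2 P$, equivalently via winning strategies for the quantum graph isomorphism $*$-game, delivers the conclusion.
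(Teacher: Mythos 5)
First, a point of comparison: the paper contains no proof of Theorem \ref{thm:quantumLovasz}. It is quoted from Mancinska and Roberson \cite{MancinskaRoberson} and used only as motivation for the question posed at the end of \S\ref{sec:Lovasz}; the only Lov\'{a}sz-type statement the paper actually proves is the matroidal Theorem \ref{thm:matroidLovasz_v2}, whose surjection/embedding counting argument is unrelated to what you propose. So your text cannot be measured against an in-paper argument; it can only be read as an outline of the original Mancinska--Roberson proof.

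As such an outline it captures the broad strategy: planar tensor-network rewriting with a magic unitary for the easy direction, and, for the converse, the identification of intertwiner spaces of quantum automorphism groups with spans of planar bi-labeled graphs followed by a Tannaka--Krein style reconstruction. But as a proof it has concrete gaps. In the forward direction you never introduce the (tracial) state needed to turn an operator-valued contraction into the scalar $|\Hom(\Gamma,\Gamma_2)|$; the pairwise cancellation of $P$ and $P^{\ast}$ around a closed planar network is exactly where traciality, and hence the precise flavour of quantum isomorphism being characterized, enters, and omitting it leaves the transport step unjustified. In the converse direction, planar homomorphism counts are not dimensions of morphism spaces $\Hom(\mathbf{1},X_\Gamma)$: they are the scalars obtained by closing up bi-labeled planar graphs, and what the hypothesis actually yields is equality of the Gram matrices of the canonical spanning sets of the intertwiner spaces. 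Upgrading that to a well-defined monoidal $\ast$-equivalence carrying $A_1$ to $A_2$, and then extracting from it a \emph{nonzero} $C^{\ast}$-algebra carrying a magic unitary with $PA_1=A_2P$ (the bi-Galois object together with its invariant state), is the technical bulk of \cite{MancinskaRoberson}; in your sketch these steps are named rather than carried out. In short, your proposal is a fair summary of the cited proof, but on its own it amounts to the same external citation that the paper itself makes.
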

\noindent
In this section, we derive a matroidal analog of Theorem \ref{thm:Lovasz}, which is the first critical step to see if \textit{quantum isomorphic} matroids admit a characterization in terms of counts of maps of matroids.

Matroid theory is somewhat complicated from a categorical point of view.
Namely, there are at least two competing concepts for morphisms, which are discussed and compared in Chapters 8 and 9 of \cite{White}.
One type of such morphisms are the strong maps, which we will define next.
Let $\sM_1$ and $\sM_2$. A \textit{strong map} $\varphi:\sM_1\to \sM_2$ is a function
\begin{equation*}
	\varphi:E(\sM_1) \sqcup \{e\} \to E(\sM_2) \sqcup \{e\}
\end{equation*}
such that $\varphi(e) = e$ and the inverse image of a flat of $\sM_2\oplus \sU(0,e)$ is a flat of $\sM_1\oplus \sU(0,e)$. The \textit{image} of $\varphi$, denoted by $\varphi(\sM_1)$, is the restriction $\sM_2|_{E_1'}$ where $E_1' = \varphi(E(\sM_1) \cup \{e\}) \cap E(\sM_2)$. (Given $L\subseteq E(\sM)$, the restriction $\sM_{|L}$ is simply the deletion of $E(\sM) \setminus L$ from $\sM$ as defined at the beginning of \S\ref{sec:QuantumSymmetriesByRank}.)  Denote by $\Hom(\sM_1,\sM_2)$ the set of strong maps $\varphi: \sM_1 \to \sM_2$. An \textit{embedding} of matroids is a strong map $\varphi:\sM_1\hookrightarrow \sM_2$ such that $\varphi:E(\sM_1)\sqcup \{e\} \to E(\sM_2)\sqcup \{e\}$ is injective and $\sM_1 \cong \varphi(\sM_1)$. 
We are now equipped for the main result of this section.

\begin{theorem}
\label{thm:matroidLovasz_v2}
	Two matroids $\sM_1$ and $\sM_2$ are isomorphic to each other if and only if 
	\begin{equation*}
		|\Hom(\sM_1, \sL)| = |\Hom(\sM_2, \sL)|
	\end{equation*}
	for all matroids $\sL$. 
\end{theorem}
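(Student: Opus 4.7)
The forward implication is immediate: an isomorphism $\sigma: \sM_1 \to \sM_2$ induces a bijection $\Hom(\sM_1, \sL) \to \Hom(\sM_2, \sL)$ via $\varphi \mapsto \varphi \circ \sigma^{-1}$. For the converse, my plan is to adapt Lov\'asz's classical counting argument, with embeddings playing the role of injective graph homomorphisms and surjective strong maps playing the role of graph quotients.

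The first step is to establish the factorization formula
\begin{equation*}
|\Hom(\sM, \sL)| \;=\; \sum_{[\sN]} \frac{|\Surj(\sM, \sN)|}{|\Aut(\sN)|} \cdot |\Emb(\sN, \sL)|,
\end{equation*}
where the sum is over isomorphism classes of matroids and $\Surj(\sM, \sN)$ denotes the set of surjective strong maps, i.e., strong maps $\varphi$ with $\varphi(\sM) = \sN$. To prove it, every strong map $\varphi: \sM \to \sL$ factors uniquely as the composition of a surjective strong map $\sM \to \sL|_{F}$ with the tautological embedding $\sL|_{F} \hookrightarrow \sL$, where $F = \varphi(E(\sM)) \cap E(\sL)$. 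Partitioning $\Hom(\sM, \sL)$ by $F$ and grouping the $F$s by the isomorphism class of $\sL|_{F}$ yields the identity, using the standard count $|\{F \subseteq E(\sL) : \sL|_{F} \cong \sN\}| = |\Emb(\sN, \sL)|/|\Aut(\sN)|$. Each sum is finite because nonzero terms require $|E(\sN)| \leq \min(|E(\sM)|, |E(\sL)|)$.

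Next I would prove that the family of functions $\{\sL \mapsto |\Emb(\sN, \sL)|\}_{[\sN]}$ is linearly independent. Given a hypothetical nontrivial relation $\sum_{[\sN]} \alpha_{\sN} |\Emb(\sN, -)| = 0$, pick $\sN_0$ of minimal ground-set size with $\alpha_{\sN_0} \neq 0$ and evaluate at $\sL = \sN_0$; only classes of ground-set size $\geq |E(\sN_0)|$ can contribute by minimality, and of these only $\sN \cong \sN_0$ has $|\Emb(\sN, \sN_0)| \neq 0$, yielding $\alpha_{\sN_0} |\Aut(\sN_0)| = 0$, a contradiction. Combined with the factorization identity, the hypothesis $|\Hom(\sM_1, \sL)| = |\Hom(\sM_2, \sL)|$ for all $\sL$ then forces $|\Surj(\sM_1, \sN)| = |\Surj(\sM_2, \sN)|$ for every $[\sN]$.

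Specializing to $\sN = \sM_1$ gives $|\Surj(\sM_2, \sM_1)| = |\Surj(\sM_1, \sM_1)| = |\Aut(\sM_1)| > 0$, producing a surjective strong map $\sM_2 \to \sM_1$ and forcing $|E(\sM_2)| \geq |E(\sM_1)|$; the symmetric argument gives equality of ground-set sizes, at which point no element can be sent to the distinguished point, so any such surjective strong map is bijective on ground sets. To upgrade these bijective strong maps in both directions to a matroid isomorphism, I would observe that a bijective strong map $\varphi: \sM_1 \to \sM_2$ yields an injection of flat lattices $\pF(\sM_2) \hookrightarrow \pF(\sM_1)$ via $G \mapsto \varphi^{-1}(G)$, while the reverse bijective strong map yields the opposite injection; both injections are therefore bijections, so $\varphi$ sends flats to flats in both directions, preserves closure, and is a matroid isomorphism. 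The main obstacle I anticipate is the careful verification of the factorization identity within the formal framework of strong maps---in particular, the bookkeeping with the distinguished point $e$ and the direct summand $\sU(0,e)$ in the definition---together with the final upgrade from bijective strong maps to isomorphisms; the surrounding argument is a clean M\"obius-type inversion that closely parallels Lov\'asz's original proof.
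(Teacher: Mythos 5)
Your proposal is correct and follows essentially the same route as the paper: the same $\Surj$--$\Emb$--$\Aut$ decomposition of $|\Hom(\sM_i,\sL)|$ (the paper's Lemma \ref{lem:decomposeHom}), inverted to conclude $|\Surj(\sM_1,\sN)|=|\Surj(\sM_2,\sN)|$ for all $\sN$, and then surjective strong maps in both directions forcing an isomorphism (the paper's Lemma \ref{lem:strongMapInjectiveIso_v2}, which cites Kung's Proposition~8.1.6 where you instead argue directly with the finite flat lattices). The only packaging differences are that the paper carries out the inversion by induction on $|E(\sL)|$ rather than via linear independence of the embedding-count functions, and proves the decomposition by counting factorization pairs modulo $\Aut(\sN)$ rather than partitioning by the image subset; these are equivalent.
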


\noindent Note the crucial difference between Theorems \ref{thm:Lovasz} and \ref{thm:matroidLovasz_v2} is that Theorem \ref{thm:Lovasz} concerns counts of graph homomorphisms \textit{to} the graphs $\Gamma_1$ and $\Gamma_2$, whereas Theorem \ref{thm:matroidLovasz_v2} concerns counts of strong maps \textit{from} the matroids $\sM_1$ and $\sM_2$. This is reflected in the fact that if $\varphi:\Gamma_1 \to \Gamma_2$ is a graph homomorphism between graphs with the same number of vertices such that $\varphi(\Gamma_1) = \Gamma_2$, then $\varphi$ is an isomorphism.   However, if $\psi:\sM_1\to \sM_2$ is an embedding of matroids on ground sets of the same size, then $\psi$ is an isomorphism.

\begin{lemma}
	\label{lem:strongMapInjectiveIso_v2}
The matroids $\sM_1$ and $\sM_2$ are isomorphic if and only if there are surjective strong maps $\varphi_1:\sM_1 \to \sM_2$ and $\varphi_2:\sM_2 \to \sM_1$. 
\end{lemma}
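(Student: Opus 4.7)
The forward direction is immediate: any matroid isomorphism $\varphi: E(\sM_1) \to E(\sM_2)$ extends to a bijection $E(\sM_1) \sqcup \{e\} \to E(\sM_2) \sqcup \{e\}$ by $\varphi(e) = e$, and since $\varphi$ bijectively intertwines flats (including the flats adjoined containing $e$ from $\sU(0,e)$), both $\varphi$ and $\varphi^{-1}$ qualify as surjective strong maps.

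For the converse, I plan a three-step argument that upgrades the existence of surjective strong maps to a genuine isomorphism. First, I would show that $|E(\sM_1)| = |E(\sM_2)|$ and that $\varphi_1, \varphi_2$ are in fact bijections. This is a cardinality squeeze: the surjection $\varphi_1$ forces $|E(\sM_1)| \geq |E(\sM_2)|$, the surjection $\varphi_2$ gives the reverse inequality, and any surjection between finite sets of equal size is a bijection. Since each $\varphi_i$ fixes $e$, each restricts to a bijection $E(\sM_1) \to E(\sM_2)$ (respectively $E(\sM_2) \to E(\sM_1)$), and in particular no ground-set element is sent to the adjoined loop $e$.

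Second, and this is the heart of the argument, I would analyze the composition $\psi = \varphi_2 \circ \varphi_1 : \sM_1 \to \sM_1$. Compositions of strong maps are strong maps (the preimage of a preimage is a preimage), so $\psi$ is a bijective strong self-map of $\sM_1$ fixing $e$. I would prove that any such $\psi$ must lie in $\Aut(\sM_1)$: since $\psi$ is a bijection and $\psi^{-1}(e) = \{e\}$, the strong-map condition reduces to the statement that $\psi^{-1}(F)$ is a flat of $\sM_1$ for every flat $F$ of $\sM_1$. Thus $F \mapsto \psi^{-1}(F)$ defines an injective self-map of the finite lattice of flats, hence a bijection. So $\psi$ also maps flats of $\sM_1$ bijectively to flats of $\sM_1$, and because a matroid is determined by its closure operator, $\psi \in \Aut(\sM_1)$.

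Finally, the conclusion drops out quickly: because $\psi$ is an automorphism, $\psi^{-1}$ is a strong map $\sM_1 \to \sM_1$, and therefore $\varphi_1^{-1} = \psi^{-1} \circ \varphi_2$ is a composition of strong maps, hence a strong map $\sM_2 \to \sM_1$. With $\varphi_1$ and $\varphi_1^{-1}$ both strong and both bijective, the flat lattices of $\sM_1$ and $\sM_2$ correspond bijectively under $\varphi_1$, which is precisely the condition that $\varphi_1$ be an isomorphism $\sM_1 \cong \sM_2$. The main obstacle I anticipate is the middle step—verifying that a bijective strong self-map is necessarily a matroid automorphism; once this \emph{flat-count pigeonhole} observation is in place (leaning essentially on finiteness of the lattice of flats and on matroids being determined by their closure operators), the remainder is formal.
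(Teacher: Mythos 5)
Your proposal is correct, but it takes a genuinely different route from the paper. The paper's proof is a two-line affair: after the same cardinality squeeze showing $\varphi_1,\varphi_2$ are bijections, it observes that strong maps cannot increase rank, so the two bijective strong maps force $\rank(\sM_1)=\rank(\sM_2)$, and then it invokes an external result (Kung, Proposition 8.1.6) stating that a bijective, rank-preserving strong map is an isomorphism. You instead avoid both the rank comparison and the citation: you pass to the composite $\psi=\varphi_2\circ\varphi_1$, note it is a bijective strong self-map of $\sM_1$ fixing $e$, and run a pigeonhole argument on the finite family of flats (the injective map $F\mapsto\psi^{-1}(F)$ must be a bijection of the flats), concluding $\psi\in\Aut(\sM_1)$ because the flats determine the matroid; from this, $\varphi_1^{-1}=\psi^{-1}\circ\varphi_2$ is strong, and a bijection that is strong in both directions carries flats to flats bijectively, hence is an isomorphism. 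Your steps all check out (in particular the reduction of the strong-map condition to ``preimages of flats of $\sM_1$ are flats of $\sM_1$'' is legitimate, since the flats of $\sM\oplus\sU(0,e)$ are exactly $F\cup\{e\}$ with $F\in\pF(\sM)$ and $\psi$ fixes $e$). What each approach buys: the paper's argument is shorter but leans on the rank-nonincreasing property of strong maps and an external structural result, whereas yours is self-contained and elementary, essentially reproving the relevant special case of that result via finiteness of the lattice of flats, at the cost of a somewhat longer argument.
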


\begin{proof}
	Since the maps $\varphi_1:E(\sM_1)\sqcup \{e\} \to E(\sM_2) \sqcup \{e\}$ and  $\varphi_2:E(\sM_2)\sqcup \{e\} \to E(\sM_1) \sqcup \{e\}$ are both surjective maps of finite sets, they are bijective. Since $\varphi_1$ and $\varphi_2$ are strong maps of the same size, the matroids $\sM_1$ and $\sM_2$ have the same rank, and hence $\varphi_1$ and $\varphi_2$ define isomorphisms, see \cite[Proposition~8.1.6]{Kung}. 
\end{proof}

\noindent
Denote by $\Emb(\sM_1,\sM_2)$ the set of embeddings $\sM_1 \hookrightarrow \sM_2$ and $\Surj(\sM_1,\sM_2)$ the set of surjective strong maps $\sM_1\to \sM_2$, respectively. Given another matroid $\sN$,  set 
\begin{equation*}
  \Hom(\sM_1,\sM_2; \sN) = \{\varphi \in \Hom(\sM_1,\sM_2) \, : \, \varphi(\sM_1) \cong \sN \}.
\end{equation*}
The following lemma and its proof are  matroidal analogs of \cite[Equation 6]{Lovasz}. 
\begin{lemma}
  \label{lem:decomposeHom}
  Given matroids $\sM_1$ and $\sM_2$, we have 	
  \begin{equation*}
    |\Hom(\sM_1,\sM_2)|= \sum_{\sN\in \pM} |\Hom(\sM_1,\sM_2;\sN)| = \sum_{\sN\in \pM} \frac{|\Surj(\sM_1,\sN)||\Emb(\sN,\sM_2)|}{|\Aut(\sN)|}
  \end{equation*}
  where $\pM$ is a set of representatives of the isomorphism classes of matroids. 
\end{lemma}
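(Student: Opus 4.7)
The plan is to establish the two equalities separately, with the first being essentially a partition argument and the second a fiber-counting argument.

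For the first equality, I would simply observe that every strong map $\varphi \in \Hom(\sM_1, \sM_2)$ has a well-defined image $\varphi(\sM_1) = \sM_2|_{E_1'}$, which is a matroid in its own right and therefore isomorphic to a unique representative $\sN \in \pM$. This partitions $\Hom(\sM_1, \sM_2)$ into the disjoint union of the sets $\Hom(\sM_1, \sM_2; \sN)$ over $\sN \in \pM$, which gives the first equality.

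For the second equality, I would construct a map
\begin{equation*}
  \Phi: \Surj(\sM_1, \sN) \times \Emb(\sN, \sM_2) \longrightarrow \Hom(\sM_1, \sM_2; \sN), \qquad (\psi, \iota) \longmapsto \iota \circ \psi,
\end{equation*}
and then show that $\Phi$ is surjective with all fibers of size $|\Aut(\sN)|$. Surjectivity comes from the canonical factorization $\varphi = \iota_{E_1'} \circ \varphi_{\mathrm{surj}}$ of any $\varphi \in \Hom(\sM_1, \sM_2; \sN)$ through its image $\sM_2|_{E_1'}$: any choice of isomorphism $\alpha: \sN \to \sM_2|_{E_1'}$ produces $\psi = \alpha^{-1} \circ \varphi_{\mathrm{surj}} \in \Surj(\sM_1, \sN)$ and $\iota = \iota_{E_1'} \circ \alpha \in \Emb(\sN, \sM_2)$ with $\Phi(\psi, \iota) = \varphi$.

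For the fiber count, the key point is that a factorization $\varphi = \iota \circ \psi$ with $\psi$ surjective forces $\iota(\sN) = \varphi(\sM_1) = \sM_2|_{E_1'}$; hence $\iota$ must be one of the $|\Aut(\sN)|$ isomorphisms $\sN \to \sM_2|_{E_1'}$ composed with the inclusion into $\sM_2$, and $\psi$ is then determined as $\iota^{-1} \circ \varphi$. Combining surjectivity with the constant fiber size yields $|\Surj(\sM_1, \sN)| \cdot |\Emb(\sN, \sM_2)| = |\Aut(\sN)| \cdot |\Hom(\sM_1, \sM_2; \sN)|$, which is the desired identity.

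The main subtlety I expect to encounter is bookkeeping around the definition of a strong map as a pointed map $E(\sM_1) \sqcup \{e\} \to E(\sM_2) \sqcup \{e\}$: one must verify that the canonical factorization $\varphi = \iota_{E_1'} \circ \varphi_{\mathrm{surj}}$ really respects this pointed structure and produces a genuine surjective strong map onto $\sM_2|_{E_1'}$, and dually that the inclusion $E_1' \sqcup \{e\} \hookrightarrow E(\sM_2) \sqcup \{e\}$ is an embedding in the sense defined in the paper. Both reductions should follow directly from the definition of a strong map via preimages of flats, together with the restriction construction recalled in \S\ref{sec:QuantumSymmetriesByRank}, but this is where I would be most careful to avoid cutting corners.
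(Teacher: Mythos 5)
Your proposal is correct and follows essentially the same route as the paper: the fibers of your composition map $\Phi$ are exactly the sets $E_{\varphi}$ of factorizations that the paper counts, with surjectivity coming from the canonical factorization of a strong map through its image and the fiber size $|\Aut(\sN)|$ coming from twisting a fixed factorization by automorphisms of $\sN$. The first equality is the same partition-by-image argument in both cases, so there is nothing substantive to add.
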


\begin{proof}
  Given  $\varphi \in \Hom(\sM_1,\sM_2; \sN)$, define
  \begin{equation*}
    E_{\varphi} = \{(\pi, \psi) \, : \, \pi \in \Surj(\sM_1,\sN), \ \psi \in \Emb(\sN,\sM_2), \ \varphi =  \psi \circ \pi  \}
  \end{equation*}
  We claim the following:

  \medskip
  
  \begin{enumerate}
  \item $|E_{\varphi}| = |\Aut(\sN)|$;
  \item $E_{\varphi_1} \cap E_{\varphi_2} = \emptyset$ for different $\varphi_1,\varphi_2 \in  \Hom(\sM_1,\sM_2;\sN)$;
  \item every pair $(\pi, \psi)$ with $ \pi \in \Surj(\sM_1,\sN) $ and $\psi \in \Emb(\sN,\sM_2)$ is in some $E_{\varphi}$.
  \end{enumerate}
  
  \medskip

  \noindent Properties (2) and (3) are clear, so consider (1). First, we claim that $E_{\varphi} \neq \emptyset$. By \cite[Lemma~8.1.4]{Kung}, we can factor $\varphi$ into strong maps $\pi':\sM_1 \to \varphi(\sM_1)$ and $\psi':\varphi(\sM_1) \to \sM_2$. Fix an isomorphism $\sigma: \varphi(\sM_1) \to \sN$. Then $(\sigma \circ \pi', \psi'\circ \sigma^{-1}) \in \Hom(\sM_1, \sM_2; \sN)$.  
  
  Fix a pair $(\pi_0, \psi_0) \in E_{\varphi}$ and set
  \begin{equation*}
    E'_{\varphi} = \{ (\sigma \circ \pi_0, \psi_0 \circ \sigma^{-1}) \, : \, \sigma \in \Aut(\sN)\}
  \end{equation*}
  If $\sigma_1, \sigma_2 \in \Aut(\sN)$ are distinct, then $\sigma_1 \circ \pi_0 \neq \sigma_2 \circ \pi_0$ since $\pi_0$ is surjective. So $|E_{\varphi}'| = |\Aut(\sN)|$ and $E_{\varphi}' \subseteq E_{\varphi}$. Conversely, suppose $(\pi,\psi) \in E_{\varphi}$. The images $\psi(\sN)$ and $\psi_{0}(\sN)$ both equal $\varphi(\sM_1)$, and $\psi,\psi_{0}:\sN \to \varphi(\sM_1)$ are isomorphisms. Then $\sigma = \psi^{-1} \circ \psi_{0}$ is an automorphism of $\sN$ and $(\pi,\psi) = (\sigma \circ \pi_{0}, \psi_{0} \circ \sigma^{-1})$. Therefore  $E_{\varphi} = E_{\varphi}'$, which proves (1). 
  
  By these properties, $\{E_{\varphi} \, : \, \varphi \in \Hom(\sM_1,\sM_2;\sN)\}$ form a partition of a set of size $|\Surj(\sM_1,\sN)||\Emb(\sN,\sM_2)|$ into $|\Hom(\sM_1,\sM_2;\sN)|$ subsets of size $|\Aut(\sN)|$. So
  \begin{equation*}
    |\Hom(\sM_1,\sM_2;\sN)||\Aut(\sN)| =  |\Surj(\sM_1,\sN)||\Emb(\sN,\sM_2)|,
  \end{equation*}
  from which the theorem follows. 
\end{proof}

\begin{proof}[Proof of Theorem~\ref{thm:matroidLovasz_v2}]
  We prove that $|\Surj(\sM_1, \sL)| = |\Surj(\sM_2, \sL)|$ for all matroids  $\sL$. If  $\sL=\sM_2$ then $\Surj(\sM_1,\sM_2)\neq \emptyset$, and if $\sL=\sM_1$ then $\Surj(\sM_2,\sM_1)\neq \emptyset$. The theorem then follows from Lemma \ref{lem:strongMapInjectiveIso_v2}. 
  
  We proceed by induction on $|E(\sL)|$. 	The equality  $|\Surj(\sM_1, \sL)| = |\Surj(\sM_2, \sL)|$ is clear when $|E(\sL)| = 1$. Now, observe that 
  \begin{equation*}
    |\Surj(\sM_i,\sL)| = \sum_{\substack{\sN\in \pM \\ |E(\sN)|=|E(\sL)|}} |\Hom(\sM_i, \sL; \sN)|
  \end{equation*}
  By Lemma \ref{lem:decomposeHom}, we have 
  \begin{equation*}
    |\Hom(\sM_i,\sL)| = |\Surj(\sM_i,\sL)| +  \sum_{\substack{\sN\in \pM \\ |E(\sN)|<|E(\sL)|}} \frac{|\Surj(\sM_i,\sN)||\Emb(\sN,\sL)|}{|\Aut(\sN)|}.
  \end{equation*}
  Since $|\Hom(\sM_1,\sL)| = |\Hom(\sM_2,\sL)|$, the difference $|\Surj(\sM_1,\sL)| - |\Surj(\sM_2,\sL)|$ is
  \begin{equation*}
    \sum_{\substack{\sN\in \pM \\ |E(\sN)|<|E(\sL)|}} \bigl(|\Surj(\sM_2,\sN)| - |\Surj(\sM_1,\sN)|\bigr) \frac{|\Emb(\sN,\sL)|}{|\Aut(\sN)|}.
  \end{equation*}
  This equals 0 by the inductive hypothesis. 
\end{proof}

The key question, however, remains open for now:
\begin{question}
  Is there a matroidal analog of Theorem~\ref{thm:quantumLovasz}?
\end{question}

\section{Computational results}
\label{sec:computations}

The study of the quantum automorphism groups of a given matroid, beyond the simplest cases, is impractical to do by hand.
Therefore, we rely on computer algebra to study specific examples.
Our primary goal is to determine, for as many matroids $\sM$ as possible, whether $\QAut{\pB}{\sM}$ and $\QAut{\pC}{\sM}$ are commutative, which in view of Proposition \ref{prop:recoverOrdinaryAut} is equivalent to being isomorphic to $C(\Aut(\sM))$.
In view of Remark~\ref{rem:coefficients} it suffices to compute with quantum permutation groups with rational coefficients.
We employ the free open-source software \Oscar~\cite{OSCAR-book, Oscar} which features two different methods for computing noncommutative Gr\"{o}bner bases.
The results are given as tables in Appendix~\ref{sec:tables}.

\subsection{Algorithms and their implementations}
Hilbert's basis theorem says that every ideal $I$ in a commutative polynomial ring is finitely generated, i.e., such a ring is \emph{Noetherian}.
Furthermore, the ideal $I$ has particularly useful generating systems, known as \emph{Gr\"{o}bner bases}, which, e.g., allow to decide whether a given polynomial is contained in $I$ or not.
Buchberger's algorithm is a classical method to convert any finite generating system of $I$ into a Gr\"{o}bner basis.
For details, we recommend the textbook by von zur Gathen and Gerhard \cite[\S21]{vonzurGathenGerhard:2003}.

Our computations employ a noncommutative generalization of Buchberger's algorithm, which are substantially more subtle than their commutative analogs.
We consider the polynomial ring $R=\Q\langle X\rangle$ with a finite set $X$ of noncommuting variables and rational coefficients.
The biggest obstacle to overcome for \emph{any} algorithm dealing with ideals of $R$ is the fact that $R$ is not Noetherian. 
Nonetheless, here is an outline of an algorithm by Xiu~\cite[\S4]{xiu_2012}; precise definitions of all italicized terms may be found in this text.
That algorithm and its \Oscar implementation were already employed in \cite{LevandovskyyEderSteenpassSchmidtSchanzWeber}.

\begin{algorithm}[Noncommutative Buchberger]
  \label{algo:buchberger}
  Let $I\subset R$ be a (two-sided) ideal of $R$ generated by a finite list $\pG$.
  \begin{enumerate}
  \item For each pair $f$ and $g$ in $\pG$, including the case $f = g$, we compute the set of 
     all (nontrivial) \textit{obstructions} of $f$ and  $g$ and call it $o_{f,g}$ (an obstruction is a noncommutative analog of a syzygy). The set $B$ is initialized as the union of all obstruction sets $o_{f, g}$ of $\pG$.
  \item If  $B = \varnothing$, return $\pG$.
    Otherwise select an obstruction $o_{f, g}(w_f, w_f', w_g, w_g') \in B$ using a fair strategy and delete it from $B$.
    The four monomials $w_f$, $w_f'$, $w_g$, $w_g'$ that specify the obstruction satisfy 
    $w_f \operatorname{LT}_{\sigma}(f) w_f' = w_g \operatorname{LT}_{\sigma}(g) w_g'$. Here,  $\operatorname{LT}_{\sigma}(x)$ denotes the \textit{leading term}
    of the polynomial $x$ with respect to the degree reverse lexicographic ordering $\sigma$.
    A selection strategy is \textit{fair} if it ensures that every obstruction is eventually selected.
  \item Compute the  $S$-polynomial  $S = S_{f, g}(w_{f}, w_{f}'; w_{g}, w_{g}')$ and its \textit{normal remainder} $S' \coloneqq \operatorname{NR}_{\sigma, \pG}(S)$.
    If $S' = 0$, continue with step (2).
  \item Append  $S'$ to $\pG$. Append the obstructions  $o_{h,S'}$ for all $h\in\pG$ to $B$ (including $h = S'$). 
    Continue with step (2).
  \end{enumerate}
\end{algorithm}

Since $R$ is not Noetherian, that algorithm may not terminate, but if it does, then the output is a \emph{Gr\"obner basis} of $I$, with respect to the degee reverse lexicographic ordering.
Term orderings are subtle in the noncommutative case, whence we stick to this particular term ordering throughout.
In addition to this basic algorithm, we use some simple optimizations to make the computation run faster.
The first step for this is to make the set of obstructions $B$ into a priority queue that is sorted in the degree reverse lexicographic order, which is an improvement discussed in the PhD thesis of Keller~\cite{Keller-thesis}.
Moreover, for computing the obstructions we use the Aho--Corasick algorithm \cite{Aho+Corasick:1975} for efficiently finding substrings in a given text to improve the function that checks for divisibility by the partial Gr\"{o}bner basis.
This works by maintaining an Aho--Corasick automaton during the computation that contains all elements of the Gr\"{o}bner basis computed so far. When checking for a given monomial whether it is divisible by an element of the Gr\"{o}bner basis, one looks for the first element in the automaton that matches a substring of the monomial.
If such an element exists, that monomial is divisible by the partial Gr\"{o}bner basis.
The Algorithm~\ref{algo:buchberger} was implemented in \Oscar directly.

La Scala and Levandovskyy proposed a different way of dealing with not finitely generated ideals \cite{LaScalaLevandovskyy:2009}.
To the generating system $\pG$ of an ideal $I$ in the noncommutative polynomial ring $R$ they associate an ideal, called the \emph{letterplace ideal} of $I$, which lives in a commutative polynomial ring, but with infinitely many variables.
By restricting to subrings with finitely many variables, say $d$, this method allows to employ standard commutative Gr\"{o}bner bases to obtain \emph{truncated Gr\"{o}bner bases} of $I$.
The number $d$ is called the \emph{degree bound} of the truncation, and it is part of the input.

In contrast with the Buchberger algorithm sketched above, the advantage of the Letterplace algorithm is its termination for every input.
The drawback is that it does not directly yield a Gr\"{o}bner basis even if it exists.
On the bright side, we have the following key result.
\begin{theorem}[{La Scala and Levandovsky \cite[Cor.~3.19]{LaScalaLevandovskyy:2009}}]
  Let $I \subset R$ be a graded two-sided ideal with a finite homogeneous basis whose polynomials are all of degree $d$.
  Denote by $\pG_{d-1}$ the truncated Gr\"{o}bner basis of $I$ up to degree $d$.
  If $\pG_{d-1} = \pG_{2d-2}$, then $\pG_{d-1}$ is a Gr\"{o}bner basis of $I$.
\end{theorem}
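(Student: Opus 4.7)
The plan is to combine two ingredients: a degree bound on noncommutative S-polynomials, and the noncommutative analog of Buchberger's criterion that is implicit in Algorithm~\ref{algo:buchberger}. The guiding observation is that since the generators of $I$ all sit in degree $d$, every element of $\pG_{d-1}$ is homogeneous of degree at most $d$, so any minimal obstruction among them produces an S-polynomial whose degree is bounded in terms of $d$ alone.

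First I would establish the degree bound. For homogeneous $f, g \in R$ with $\operatorname{LT}_{\sigma}(f)$ of degree $e_f$ and $\operatorname{LT}_{\sigma}(g)$ of degree $e_g$, an obstruction $o_{f,g}(w_f, w_f'; w_g, w_g')$ records an overlap $w_f \operatorname{LT}_{\sigma}(f) w_f' = w_g \operatorname{LT}_{\sigma}(g) w_g'$. The minimal overlaps come in two flavors — one leading term properly contained in the other, or a proper prefix-suffix overlap — and in both cases the overlap monomial, and hence the S-polynomial $S_{f, g}(w_f, w_f'; w_g, w_g')$, is homogeneous of degree at most $e_f + e_g - 1$. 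In particular, when $e_f, e_g \leq d$ we get a uniform bound of $2d - 1$.

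Second I would decode the hypothesis $\pG_{d-1} = \pG_{2d-2}$. In the La Scala--Levandovskyy letterplace framework, $\pG_k$ is the truncated Gröbner basis corresponding to polynomials of degree up to $k+1$, so passing from $\pG_{d-1}$ to $\pG_{2d-2}$ is precisely the Buchberger computation in the degree range $[d+1, 2d-1]$. By the first step, all minimal S-polynomials among elements of $\pG_{d-1}$ land in exactly this degree range. The stabilization hypothesis therefore says that every such S-polynomial has normal remainder zero modulo $\pG_{d-1}$, since any nonzero remainder would have appeared as a new basis element.

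Third I would invoke the noncommutative Buchberger criterion: a finite homogeneous subset $\pG \subset R$ is a Gröbner basis of the two-sided ideal it generates with respect to $\sigma$ if and only if every minimal S-polynomial of pairs from $\pG$ reduces to zero modulo $\pG$. Applying this criterion to $\pG_{d-1}$ closes the argument. The main obstacle is enumerating and bounding the types of noncommutative overlaps carefully — unlike the commutative case, prefix-suffix overlaps, inclusions, and self-overlaps (the pair $f = g$) must each be handled, and all the minimal ones must fit within the degree bound $2d-1$. Once this combinatorial bookkeeping is in place, together with the standard proof of the noncommutative Buchberger criterion transported through the letterplace correspondence, the theorem follows.
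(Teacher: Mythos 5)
The paper does not prove this statement at all: it is imported verbatim from La Scala and Levandovskyy \cite[Cor.~3.19]{LaScalaLevandovskyy:2009}, so there is no internal proof to compare against. Your sketch is the standard argument behind that corollary, and it is sound: every element of $\pG_{d-1}$ is homogeneous of degree (at most) $d$ because $I$ is graded and generated in degree $d$; every nontrivial obstruction between two such elements has overlap word of degree at most $2d-1$; the stabilization $\pG_{d-1}=\pG_{2d-2}$ (truncation up to degree $2d-1$ in the paper's indexing, which you decode correctly) forces all of the corresponding S-polynomials to have normal remainder zero modulo $\pG_{d-1}$; and the noncommutative Buchberger/Bergman criterion for two-sided ideals of the free algebra then upgrades $\pG_{d-1}$ to a full Gr\"obner basis. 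Two points are worth making explicit if you flesh this out. First, inclusion overlaps between two homogeneous elements of the same degree $d$ are degenerate (the leading words would have to coincide), so the obstructions you must control are exactly the proper prefix--suffix overlaps and self-overlaps, whose degrees lie in $[d+1,2d-1]$; this is where the bound $2d-2$ in the index (degree $2d-1$) is sharp. Second, the criterion as you state it only certifies that $\pG_{d-1}$ is a Gr\"obner basis of the ideal it generates, so you should add the (easy, but necessary) observation that $\langle \pG_{d-1}\rangle = I$: since $I$ is generated in degree $d$ and $\pG_{d-1}$ is a truncated Gr\"obner basis up to degree $d$, every homogeneous degree-$d$ element of $I$ reduces to zero modulo $\pG_{d-1}$, hence the original generators lie in $\langle \pG_{d-1}\rangle$. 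With those two items spelled out, your plan gives a complete proof along the same lines as the cited source.
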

That is to say, if the sequence of truncated Gr\"{o}bner bases stabilizes enough, then the final truncated Gr\"{o}bner basis is actually a proper Gr\"{o}bner basis.
This method is implemented in \Letterplace \cite{letterplace}, which is also available in \Oscar.

Either method for computing noncommutative Gr\"{o}bner bases leads to a procedure for semi-deciding the commutativity of the quantum automorphism group $\QAut{\pB}{\sM}$. Given a Gr\"{o}bner basis of the ideal $I(\QAut{\pB}{\sM})$ we can check if all commutators $u_{ij}u_{k\ell}-u_{k\ell}u_{ij}$ lie in $I(\QAut{\pB}{\sM})$. Similarly for $\QAut{\pC}{\sM}$. If the Gr\"{o}bner basis computation does not terminate, we cannot say anything. Nevertheless, using these techniques we are able to determine if $\QAut{\pB}{\sM}$ or $\QAut{\pC}{\sM}$ are commutative for numerous matroids of small rank and ground set, and so we pose the following question.

\begin{question}
  Is commutativity of $\QAut{\pB}{\sM}$ or $\QAut{\pC}{\sM}$ decidable?
\end{question}

\subsection{Data}
Tables \ref{Tab:computational-results-1}, \ref{Tab:computational-results-2}, \ref{Tab:computational-results-3}, and \ref{Tab:computational-results-4} group isomorphism classes of matroids $\sM$ that have small rank and ground set based on whether $\QAut{\pB}{\sM}$ or $\QAut{\pC}{\sM}$ are commutative or noncommutative. For numerous matroids, we were only able to determine commutativity of $\QAut{\pB}{\sM}$, and this data is recorded in tables \ref{Tab:computational-results-5} and \ref{Tab:computational-results-6}.  We use the matroids from \polymake's database \texttt{polyDB} \cite{DMV:polymake,polyDB}, which is based on the data obtained in \cite{MatsumotoMoriyamaImaiBremner}.  In this database, matroids are recorded via a \textit{reverse lexicographic basis encoding}. This encoding works in the following way.  A matroid $\sM$ with  rank $r$ and ground set $\{1, \ldots, n\}$ is encoded as a binary string of length $\binom{n}{r}$. The positions of the characters in this string correspond to the $r$-element subsets of $\{1, \ldots, n\}$  in reverse lexicographic order. A \texttt{0} means that the corresponding subset is not a basis of $\sM$, and a \texttt{1} means that the corresponding subset is a basis (recorded in this database as \texttt{*}). For brevity, we record these strings in hexadecimal and pad the beginning with \texttt{0}'s so that the string has $\lceil\frac{1}{4} \binom{n}{r} \rceil$ characters. 

\begin{example}
	Consider the Fano matroid $\sF$ from Example \ref{ex:fano}. The bases of this matroid are all $3$-element subsets of $\{1,\ldots,7\}$ except for 
	\begin{equation*}
		123, 145, 246, 356, 347, 257, 167.
	\end{equation*}
	Here, $ijk$ is short for the subset $\{i,j,k\}$. There are 35 three-element subsets of $\{1,\ldots,7\}$, and in reverse lexicographic these are
	\begin{gather*}
		123, 124, 134, 234, 125, 135, 235, 145, 245, 345, 126, 136, 236, 146, 246, 346, 156, 256, 356, 456, \\
		127, 137, 237, 147, 247, 347, 157, 257, 357, 457, 167, 267, 367, 467, 567.
	\end{gather*}
	The revlex basis encoding of $\sF$ reads
	\begin{equation*}
		\texttt{011\;1111\;0111\;1110\;1110\;1111\;1101\;0110\;1111}
	\end{equation*}
	and so the hexadecimal encoding is \texttt{3f7eefd6f}. 
\end{example}

Our tables in Appendix~\ref{sec:tables} comprise the complete data for all isomorphism classes of matroids with ground set sizes $n\leq 6$ and rank $r \leq 2$.
We have partial data for $(r,n) = (3,6), (3,7)$.
The columns in each table record the following information.
\begin{enumerate}
\item Matroid: the matroid $\sM$ in hexadecimal encoding;
\item $|E(\sM)|$: the size of the ground set of $\sM$;
\item $\rank(\sM)$: the rank of $\sM$;
\item $\girth(\sM)$: the girth of the matroid;
\item \# nonbases: the number $\binom{n}{r} - |\pB(\sM)|$ where $n=|E(\sM)|$ and $r = \rank(\sM)$;
\item $|\Aut(\sM)|$: the order of the classical automorphism group of $\sM$;
\item $d_{\pB}(\sM)$: degree of the Gr\"{o}bner basis computed for $I(\QAut{\pB}{\sM})$.
\end{enumerate}
All our results have been obtained with the \Oscar implementation of the noncommutative Buchberger Algorithm~\ref{algo:buchberger}.

Since we are dealing with a computational task which a priori is not finite, it is useful to start with matroids for which it seems likely that the computation can be finished.
Our experiments suggest the following heuristics.
We start as low as $n=2$, and then we stepwise increase the number of elements in the ground set.
For each fixed $n$ we sort the matroids by increasing rank $r$.
The set of $(r,n)$--matroids is further sorted by taking those first for which the parameter $|\pB| \cdot \left( \sum_{k=1}^r{ n \choose k } - |\pB| \right)$ is small; the latter number is the number of relations defining the ideal $I(\QAut{\pB}{\sM})$; cf.\ \eqref{eq:IA}.
We stopped the calculation as soon as the elapsed time exceeded one week per matroid.
This resulted in a total of $207$ matroids for which we could compute the commutativity of $\QAut{\pB}{\sM}$, out of which we could determine $\QAut{\pC}{\sM}$ for $85$ matroids.

All computations were done on the HPC-Cluster at Technische Universit\"{a}t Berlin.
Specifically, we used an openSUSE Leap 15.4 distribution on a cluster, where each node has 2x AMD EPYC 7302 16-core processors with 1024 gigabytes of RAM.
The computation times vary a lot, matroids with $n\geq6$ elements routinely take $24$ hours or more. The total computation time for the base versions was about 33 days.

\bibliographystyle{abbrv}
\bibliography{bibliographie}

\begin{thebibliography}{10}

\bibitem{Aho+Corasick:1975}
A.~V. Aho and M.~J. Corasick.
\newblock Efficient string matching: an aid to bibliographic search.
\newblock {\em Comm. ACM}, 18:333--340, 1975.

\bibitem{CrapoRota}
H.~H. Crapo and G.-C. Rota.
\newblock {\em On the foundations of combinatorial theory: {C}ombinatorial
  geometries}.
\newblock The M.I.T. Press, Cambridge, Mass.-London, preliminary edition, 1970.

\bibitem{Daws:2022}
M.~Daws.
\newblock Quantum graphs: different perspectives, homomorphisms and quantum
  automorphisms, 2022.
\newblock Preprint \arXiv{2203.08716}.

\bibitem{mrdi-files}
A.~Della~Vecchia, M.~Joswig, and B.~Lorenz.
\newblock A {FAIR} file format for mathematical software, 2023.
\newblock Preprint \arXiv{2309.00465}.

\bibitem{OSCAR-book}
C.~Eder, W.~Decker, C.~Fieker, M.~Horn, and M.~Joswig, editors.
\newblock {\em The {\Oscar} book}.
\newblock Springer, 2024.

\bibitem{Freslon}
A.~Freslon.
\newblock {\em Compact matrix quantum groups and their combinatorics}, volume
  106 of {\em London Mathematical Society Student Texts}.
\newblock Cambridge University Press, Cambridge, 2023.

\bibitem{DMV:polymake}
E.~Gawrilow and M.~Joswig.
\newblock \polymake: a framework for analyzing convex polytopes.
\newblock In {\em Polytopes---combinatorics and computation (Oberwolfach,
  1997)}, volume~29 of {\em DMV Sem.}, pages 43--73. Birk\-h\"au\-ser, Basel,
  2000.

\bibitem{Gromada}
D.~Gromada.
\newblock Quantum symmetries of hadamard matrices, 2023.
\newblock Preprint \arXiv{2210.02047}.

\bibitem{Keller-thesis}
B.~J. Keller.
\newblock {\em Algorithms and Orders for Finding Noncommutative {G}r\"{o}bner
  Bases}.
\newblock PhD thesis, Virginia Polytechnic Institute and State University,
  1997.

\bibitem{Kung}
J.~P.~S. Kung.
\newblock Strong maps.
\newblock In {\em Theory of matroids}, volume~26 of {\em Encyclopedia Math.
  Appl.}, pages 224--253. Cambridge Univ. Press, Cambridge, 1986.

\bibitem{LaScalaLevandovskyy:2009}
R.~La~Scala and V.~Levandovskyy.
\newblock Letterplace ideals and non-commutative {G}r\"{o}bner bases.
\newblock {\em J. Symbolic Comput.}, 44(10):1374--1393, 2009.

\bibitem{LevandovskyyEderSteenpassSchmidtSchanzWeber}
V.~Levandovskyy, C.~Eder, A.~Steenpass, S.~Schmidt, J.~Schanz, and M.~Weber.
\newblock Existence of quantum symmetries for graphs on up to seven vertices: A
  computer based approach.
\newblock In {\em Proceedings of the 2022 International Symposium on Symbolic
  and Algebraic Computation}, ISSAC '22, page 311–318, New York, NY, USA,
  2022. Association for Computing Machinery.

\bibitem{letterplace}
V.~Levandovskyy, H.~Sch\"{o}nemann, and K.~Abou~Zeid.
\newblock {\Letterplace}---a subsystem of {\Singular} for computations with
  free algebras via letterplace embedding.
\newblock In {\em I{SSAC}'20---{P}roceedings of the 45th {I}nternational
  {S}ymposium on {S}ymbolic and {A}lgebraic {C}omputation}, pages 305--311.
  ACM, New York, 2020.

\bibitem{Lovasz}
L.~Lov\'{a}sz.
\newblock Operations with structures.
\newblock {\em Acta Math. Acad. Sci. Hungar.}, 18:321--328, 1967.

\bibitem{MangWeber}
A.~Mang and M.~Weber.
\newblock Categories of two-colored pair partitions {P}art {II}: {C}ategories
  indexed by semigroups.
\newblock {\em J. Combin. Theory Ser. A}, 180:Paper No. 105409, 43, 2021.

\bibitem{MancinskaRoberson}
L.~Mančinska and D.~E. Roberson.
\newblock Quantum isomorphism is equivalent to equality of homomorphism counts
  from planar graphs.
\newblock In {\em 2020 IEEE 61st Annual Symposium on Foundations of Computer
  Science (FOCS)}, pages 661--672, 2020.

\bibitem{MatsumotoMoriyamaImaiBremner}
Y.~Matsumoto, S.~Moriyama, H.~Imai, and D.~Bremner.
\newblock Matroid enumeration for incidence geometry.
\newblock {\em Discrete Comput. Geom.}, 47(1):17--43, 2012.

\bibitem{MayhewNewmanWelshWhittle}
D.~Mayhew, M.~Newman, D.~Welsh, and G.~Whittle.
\newblock On the asymptotic proportion of connected matroids.
\newblock {\em European J. Combin.}, 32(6):882--890, 2011.

\bibitem{Oxley}
J.~Oxley.
\newblock {\em Matroid theory}, volume~21 of {\em Oxford Graduate Texts in
  Mathematics}.
\newblock Oxford University Press, Oxford, second edition, 2011.

\bibitem{polyDB}
A.~Paffenholz.
\newblock {\tt poly{DB}}: a database for polytopes and related objects.
\newblock In {\em Algorithmic and experimental methods in algebra, geometry,
  and number theory}, pages 533--547. Springer, Cham, 2017.

\bibitem{SpeicherWeber}
R.~Speicher and M.~Weber.
\newblock Quantum groups with partial commutation relations.
\newblock {\em Indiana Univ. Math. J.}, 68(6):1849--1883, 2019.

\bibitem{Oscar}
{The \Oscar Team}.
\newblock {\Oscar} -- open source computer algebra research system, version
  0.14.0-dev, 2023.

\bibitem{vonzurGathenGerhard:2003}
J.~von~zur Gathen and J.~Gerhard.
\newblock {\em Modern computer algebra}.
\newblock Cambridge University Press, Cambridge, second edition, 2003.

\bibitem{Wang:1998}
S.~Wang.
\newblock Quantum symmetry groups of finite spaces.
\newblock {\em Comm. Math. Phys.}, 195(1):195--211, 1998.

\bibitem{Wang:1999}
S.~Wang.
\newblock Ergodic actions of universal quantum groups on operator algebras.
\newblock {\em Comm. Math. Phys.}, 203(2):481--498, 1999.

\bibitem{White}
N.~White, editor.
\newblock {\em Theory of matroids}, volume~26 of {\em Encyclopedia of
  Mathematics and its Applications}.
\newblock Cambridge University Press, Cambridge, 1986.

\bibitem{Whitney1933}
H.~Whitney.
\newblock 2-{I}somorphic graphs.
\newblock {\em Amer. J. Math.}, 55(1-4):245--254, 1933.

\bibitem{Whitney:1935}
H.~Whitney.
\newblock On the abstract properties of linear dependence.
\newblock {\em Amer. J. Math.}, 57(3):509--533, 1935.

\bibitem{Woronowicz}
S.~L. Woronowicz.
\newblock Tannaka-{K}re\u{\i}n duality for compact matrix pseudogroups.
  {T}wisted {${\rm SU}(N)$} groups.
\newblock {\em Invent. Math.}, 93(1):35--76, 1988.

\bibitem{xiu_2012}
X.~Xiu.
\newblock {\em Non-Commutative {G}r\"{o}bner Bases and Applications}.
\newblock PhD thesis, Universit\"{a}t Passau, 2012.

\end{thebibliography}
\label{sec:biblio}

\appendix
 
\section{Tables}
\label{sec:tables}

We distinguish whether the quantum automorphism groups $\QAut{\pB}{\sM}$ and $\QAut{\pC}{\sM}$ of the matroid $\sM$ are commutative or not.
Combining the results from Tables \ref{Tab:computational-results-3} and \ref{Tab:computational-results-4} reveals, e.g., that there is no direct generalization of Theorem~\ref{thm:QAutCInQAutB} to arbitrary matroids.
Several uniform matroids can be spotted easily, as they are the only ones without nonbases.
For instance, $\QAut{\pB}{\MatU{2}{4}}$ is noncommutative but $\QAut{\pC}{\MatU{2}{4}}$ is commutative (\texttt{3f} in Table~\ref{Tab:computational-results-4}).
Our computations for the Fano matroid $\sF$ from Example~\ref{ex:fano} did not terminate; so it does not occur here.

\begin{center}
\small
\begin{longtable}{lrrrrrr}
  \caption{$\QAut{\pB}{\sM}$ and $\QAut{\pC}{\sM}$ both noncommutative} \label{Tab:computational-results-1} \\
  \toprule
  Matroid & $|E(\sM)|$ & $\rank(\sM)$ & $\girth(\sM)$ & \#nonbases & $|\Aut(\sM)|$ & $d_{\pB}(\sM)$ \\ \midrule
  \endfirsthead
  \caption{$\QAut{\pB}{\sM}$ and $\QAut{\pC}{\sM}$ both noncommutative (continued)} \\  
  Matroid & $|E(\sM)|$ & $\rank(\sM)$ & $\girth(\sM)$ & \#nonbases & $|\Aut(\sM)|$ & $d_{\pB}(\sM)$ \\ \midrule
  \endhead
  \endfoot
  \endlastfoot
  \texttt{f} & 4 & 1 & 2 & 0 & 24 & 3 \\
  \texttt{3} & 4 & 1 & 1 & 2 & 4 & 2 \\
  \texttt{1e} & 4 & 2 & 2 & 2 & 8 & 2 \\
  \texttt{01} & 4 & 2 & 1 & 5 & 4 & 2 \\
  \texttt{1f} & 5 & 1 & 2 & 0 & 120 & 3 \\
  \texttt{0f} & 5 & 1 & 1 & 1 & 24 & 3 \\
  \texttt{07} & 5 & 1 & 1 & 2 & 12 & 2 \\
  \texttt{03} & 5 & 1 & 1 & 3 & 12 & 2 \\
  \texttt{01} & 5 & 1 & 1 & 4 & 24 & 3 \\
  \texttt{1ef} & 5 & 2 & 2 & 2 & 8 & 2 \\
  \texttt{07e} & 5 & 2 & 2 & 4 & 12 & 2 \\
  \texttt{036} & 5 & 2 & 1 & 6 & 8 & 2 \\
  \texttt{013} & 5 & 2 & 1 & 7 & 12 & 2 \\
  \texttt{00f} & 5 & 2 & 2 & 6 & 24 & 3 \\
  \texttt{003} & 5 & 2 & 1 & 8 & 4 & 2 \\
  \texttt{001} & 5 & 2 & 1 & 9 & 12 & 2 \\
  \texttt{3f} & 6 & 1 & 2 & 0 & 720 & 3 \\
  \texttt{1f} & 6 & 1 & 1 & 1 & 120 & 3 \\
  \texttt{0f} & 6 & 1 & 1 & 2 & 48 & 3 \\
  \texttt{07} & 6 & 1 & 1 & 3 & 36 & 2 \\
  \texttt{03} & 6 & 1 & 1 & 4 & 48 & 3 \\
  \texttt{01} & 6 & 1 & 1 & 5 & 120 & 3 \\
  \texttt{3dff} & 6 & 2 & 2 & 2 & 16 & 2 \\
  \texttt{3dfe} & 6 & 2 & 2 & 3 & 48 & 3 \\
  \texttt{0fdf} & 6 & 2 & 2 & 4 & 12 & 2 \\
  \texttt{0fdc} & 6 & 2 & 2 & 6 & 72 & 2 \\
  \texttt{06cf} & 6 & 2 & 1 & 7 & 8 & 2 \\
  \texttt{0267} & 6 & 2 & 1 & 9 & 48 & 3 \\
  \texttt{01ff} & 6 & 2 & 2 & 6 & 48 & 3 \\
  \texttt{01fe} & 6 & 2 & 2 & 7 & 48 & 3 \\
  \texttt{00ee} & 6 & 2 & 1 & 9 & 12 & 2 \\
  \texttt{0067} & 6 & 2 & 1 & 10 & 8 & 2 \\
  \texttt{0066} & 6 & 2 & 1 & 11 & 16 & 2 \\
  \texttt{0023} & 6 & 2 & 1 & 12 & 36 & 2 \\
  \texttt{001f} & 6 & 2 & 2 & 10 & 120 & 3 \\
  \texttt{000f} & 6 & 2 & 1 & 11 & 24 & 3 \\
  \texttt{0007} & 6 & 2 & 1 & 12 & 12 & 2 \\
  \texttt{0003} & 6 & 2 & 1 & 13 & 12 & 2 \\
  \texttt{0001} & 6 & 2 & 1 & 14 & 48 & 3 \\
  \texttt{0fff0} & 6 & 3 & 2 & 8 & 48 & 3 \\
  \texttt{079e3} & 6 & 3 & 2 & 10 & 8 & 3 \\
  \texttt{079e0} & 6 & 3 & 2 & 12 & 48 & 3 \\
  \texttt{00413} & 6 & 3 & 1 & 16 & 48 & 3 \\
  \texttt{001ef} & 6 & 3 & 2 & 12 & 8 & 3 \\
  \texttt{0007e} & 6 & 3 & 2 & 14 & 12 & 3 \\
  \texttt{00036} & 6 & 3 & 1 & 16 & 8 & 3 \\
  \texttt{00013} & 6 & 3 & 1 & 17 & 12 & 3 \\
  \texttt{0000f} & 6 & 3 & 2 & 16 & 48 & 3 \\
  \texttt{00003} & 6 & 3 & 1 & 18 & 8 & 3 \\
  \texttt{00001} & 6 & 3 & 1 & 19 & 36 & 3 \\
  \bottomrule
\end{longtable}

\end{center}

\begin{center}
\small
\begin{longtable}{lrrrrrr}
  \caption{$\QAut{\pB}{\sM}$ and $\QAut{\pC}{\sM}$ both commutative} \label{Tab:computational-results-2} \\
  \toprule
  Matroid & $|E(\sM)|$ & $\rank(\sM)$ & $\girth(\sM)$ & \#nonbases & $|\Aut(\sM)|$ & $d_{\pB}(\sM)$ \\ \midrule
  \endfirsthead
  \caption{$\QAut{\pB}{\sM}$ and $\QAut{\pC}{\sM}$ both commutative (continued)} \\  
  Matroid & $|E(\sM)|$ & $\rank(\sM)$ & $\girth(\sM)$ & \#nonbases & $|\Aut(\sM)|$ & $d_{\pB}(\sM)$ \\ \midrule
  \endhead
  \endfoot
  \endlastfoot
  \texttt{3} & 2 & 1 & 2 & 0 & 2 & 2 \\
  \texttt{1} & 2 & 1 & 1 & 1 & 1 & 2 \\
  \texttt{7} & 3 & 1 & 2 & 0 & 6 & 2 \\
  \texttt{3} & 3 & 1 & 1 & 1 & 2 & 2 \\
  \texttt{1} & 3 & 1 & 1 & 2 & 2 & 2 \\
  \texttt{7} & 4 & 1 & 1 & 1 & 6 & 2 \\
  \texttt{1} & 4 & 1 & 1 & 3 & 6 & 2 \\
  \texttt{0b} & 4 & 2 & 1 & 3 & 6 & 2 \\
  \texttt{07} & 4 & 2 & 2 & 3 & 6 & 2 \\
  \texttt{03} & 4 & 2 & 1 & 4 & 2 & 2 \\
  \texttt{007} & 5 & 2 & 1 & 7 & 6 & 2 \\
  \texttt{02cb3} & 6 & 3 & 1 & 12 & 8 & 3 \\
  \bottomrule
\end{longtable}

\end{center}

\clearpage

\begin{center}
\small
\begin{longtable}{lrrrrrr}
  \caption{$\QAut{\pB}{\sM}$ commutative but $\QAut{\pC}{\sM}$ noncommutative} \label{Tab:computational-results-3} \\
  \toprule
  Matroid & $|E(\sM)|$ & $\rank(\sM)$ & $\girth(\sM)$ & \#nonbases & $|\Aut(\sM)|$ & $d_{\pB}(\sM)$ \\ \midrule
  \endfirsthead
  \caption{$\QAut{\pB}{\sM}$ commutative but $\QAut{\pC}{\sM}$ noncommutative (continued)} \\  
  Matroid & $|E(\sM)|$ & $\rank(\sM)$ & $\girth(\sM)$ & \#nonbases & $|\Aut(\sM)|$ & $d_{\pB}(\sM)$ \\ \midrule
  \endhead
  \endfoot
  \endlastfoot
  \texttt{01c7e} & 6 & 3 & 2 & 11 & 36 & 3 \\
  \texttt{00c36} & 6 & 3 & 1 & 14 & 12 & 3 \\
  \texttt{00007} & 6 & 3 & 1 & 17 & 12 & 3 \\
  \bottomrule
\end{longtable}

\end{center}

\begin{center}
\small
\begin{longtable}{lrrrrrr}
  \caption{$\QAut{\pB}{\sM}$ noncommutative but  $\QAut{\pC}{\sM}$ commutative} \label{Tab:computational-results-4} \\
  \toprule
  Matroid & $|E(\sM)|$ & $\rank(\sM)$ & $\girth(\sM)$ & \#nonbases & $|\Aut(\sM)|$ & $d_{\pB}(\sM)$ \\ \midrule
  \endfirsthead
  \caption{$\QAut{\pB}{\sM}$ noncommutative but  $\QAut{\pC}{\sM}$ commutative (continued)} \\  
  Matroid & $|E(\sM)|$ & $\rank(\sM)$ & $\girth(\sM)$ & \#nonbases & $|\Aut(\sM)|$ & $d_{\pB}(\sM)$ \\ \midrule
  \endhead
  \endfoot
  \endlastfoot
  \texttt{3f} & 4 & 2 & 3 & 0 & 24 & 3 \\
  \texttt{1f} & 4 & 2 & 2 & 1 & 4 & 2 \\
  \texttt{3ff} & 5 & 2 & 3 & 0 & 120 & 3 \\
  \texttt{1ff} & 5 & 2 & 2 & 1 & 12 & 2 \\
  \texttt{0b7} & 5 & 2 & 1 & 4 & 24 & 3 \\
  \texttt{07f} & 5 & 2 & 2 & 3 & 12 & 2 \\
  \texttt{037} & 5 & 2 & 1 & 5 & 4 & 2 \\
  \texttt{7fff} & 6 & 2 & 3 & 0 & 720 & 3 \\
  \texttt{3fff} & 6 & 2 & 2 & 1 & 48 & 3 \\
  \texttt{16ef} & 6 & 2 & 1 & 5 & 120 & 3 \\
  \texttt{0fff} & 6 & 2 & 2 & 3 & 36 & 2 \\
  \texttt{06ef} & 6 & 2 & 1 & 6 & 12 & 2 \\
  \texttt{00ef} & 6 & 2 & 1 & 8 & 12 & 2 \\
  \texttt{07df3} & 6 & 3 & 2 & 8 & 16 & 3 \\
  \texttt{003ff} & 6 & 3 & 3 & 10 & 120 & 3 \\
  \texttt{001ff} & 6 & 3 & 2 & 11 & 12 & 3 \\
  \texttt{000b7} & 6 & 3 & 1 & 14 & 24 & 3 \\
  \texttt{0007f} & 6 & 3 & 2 & 13 & 12 & 3 \\
  \texttt{00037} & 6 & 3 & 1 & 15 & 4 & 3 \\
  \bottomrule
\end{longtable}

\end{center}

\clearpage

The remaining two tables list those matroids for which we could only compute either $\QAut{\pB}{\sM}$ or $\QAut{\pC}{\sM}$.
\begin{center}
\small
\begin{longtable}{lrrrrrr}
  \caption{$\QAut{\pB}{\sM}$ commutative but no data on $\QAut{\pC}{\sM}$} \label{Tab:computational-results-5} \\
  \toprule
  Matroid & $|E(\sM)|$ & $\rank(\sM)$ & $\girth(\sM)$ & \#nonbases & $|\Aut(\sM)|$ & $d_{\pB}(\sM)$ \\ \midrule
  \endfirsthead
  \caption{$\QAut{\pB}{\sM}$ commutative but no data on $\QAut{\pC}{\sM}$ (continued)} \\  
  Matroid & $|E(\sM)|$ & $\rank(\sM)$ & $\girth(\sM)$ & \#nonbases & $|\Aut(\sM)|$ & $d_{\pB}(\sM)$ \\ \midrule
  \endhead
  \endfoot
  \endlastfoot
  \texttt{f} & 4 & 3 & 4 & 0 & 24 & 3 \\
  \texttt{fffff} & 6 & 3 & 4 & 0 & 720 & 5 \\
  \texttt{7ffff} & 6 & 3 & 3 & 1 & 36 & 3 \\
  \texttt{7fffe} & 6 & 3 & 3 & 2 & 72 & 3 \\
  \texttt{7efff} & 6 & 3 & 3 & 2 & 8 & 3 \\
  \texttt{7efdf} & 6 & 3 & 3 & 3 & 6 & 3 \\
  \texttt{7efdd} & 6 & 3 & 3 & 4 & 24 & 3 \\
  \texttt{37dff} & 6 & 3 & 2 & 4 & 48 & 3 \\
  \texttt{379ff} & 6 & 3 & 2 & 5 & 12 & 3 \\
  \texttt{12cb7} & 6 & 3 & 1 & 10 & 120 & 4 \\
  \texttt{0fff7} & 6 & 3 & 3 & 5 & 12 & 3 \\
  \texttt{07dfd} & 6 & 3 & 2 & 7 & 4 & 3 \\
  \texttt{02cb7} & 6 & 3 & 1 & 11 & 12 & 3 \\
  \texttt{01c7f} & 6 & 3 & 2 & 10 & 36 & 3 \\
  \texttt{00c37} & 6 & 3 & 1 & 13 & 12 & 3 \\
  \texttt{0165b96ef} & 7 & 3 & 1 & 16 & 36 & 3 \\
  \texttt{0165b96ee} & 7 & 3 & 1 & 17 & 72 & 3 \\
  \texttt{0165996ef} & 7 & 3 & 1 & 17 & 8 & 3 \\
  \texttt{0165996af} & 7 & 3 & 1 & 18 & 6 & 3 \\
  \texttt{0165996ad} & 7 & 3 & 1 & 19 & 24 & 3 \\
  \texttt{0061b86ef} & 7 & 3 & 1 & 19 & 48 & 3 \\
  \texttt{0061b06ef} & 7 & 3 & 1 & 20 & 12 & 3 \\
  \texttt{0005b96e7} & 7 & 3 & 1 & 20 & 12 & 3 \\
  \texttt{0003f8ffd} & 7 & 3 & 2 & 17 & 12 & 3 \\
  \texttt{0001b86ed} & 7 & 3 & 1 & 22 & 4 & 3 \\
  \texttt{0000380ef} & 7 & 3 & 1 & 25 & 36 & 3 \\
  \texttt{0000380ee} & 7 & 3 & 1 & 26 & 36 & 3 \\
  \bottomrule
\end{longtable}

\end{center}

\begin{center}
\small
\begin{longtable}{lrrrrrr}
  \caption{$\QAut{\pB}{\sM}$ noncommutative but no data on $\QAut{\pC}{\sM}$} \label{Tab:computational-results-6} \\
  \toprule
  Matroid & $|E(\sM)|$ & $\rank(\sM)$ & $\girth(\sM)$ & \#nonbases & $|\Aut(\sM)|$ & $d_{\pB}(\sM)$ \\ \midrule
  \endfirsthead
  \caption{$\QAut{\pB}{\sM}$ noncommutative but no data on $\QAut{\pC}{\sM}$ (continued)} \\  
  Matroid & $|E(\sM)|$ & $\rank(\sM)$ & $\girth(\sM)$ & \#nonbases & $|\Aut(\sM)|$ & $d_{\pB}(\sM)$ \\ \midrule
  \endhead
  \endfoot
  \endlastfoot
  \texttt{0ffff} & 6 & 3 & 3 & 4 & 48 & 3 \\
  \texttt{07dff} & 6 & 3 & 2 & 6 & 8 & 3 \\
  \texttt{079ef} & 6 & 3 & 2 & 8 & 16 & 3 \\
  \texttt{7f} & 7 & 1 & 2 & 0 & 5040 & 3 \\
  \texttt{3f} & 7 & 1 & 1 & 1 & 720 & 3 \\
  \texttt{1f} & 7 & 1 & 1 & 2 & 240 & 3 \\
  \texttt{0f} & 7 & 1 & 1 & 3 & 144 & 3 \\
  \texttt{07} & 7 & 1 & 1 & 4 & 144 & 3 \\
  \texttt{03} & 7 & 1 & 1 & 5 & 240 & 3 \\
  \texttt{01} & 7 & 1 & 1 & 6 & 720 & 3 \\
  \texttt{1fffff} & 7 & 2 & 3 & 0 & 5040 & 3 \\
  \texttt{0fffff} & 7 & 2 & 2 & 1 & 240 & 3 \\
  \texttt{0f7fff} & 7 & 2 & 2 & 2 & 48 & 2 \\
  \texttt{0f7fbf} & 7 & 2 & 2 & 3 & 48 & 3 \\
  \texttt{05bbdf} & 7 & 2 & 1 & 6 & 720 & 3 \\
  \texttt{03ffff} & 7 & 2 & 2 & 3 & 144 & 3 \\
  \texttt{03f7ff} & 7 & 2 & 2 & 4 & 24 & 2 \\
  \texttt{03f7fe} & 7 & 2 & 2 & 5 & 48 & 2 \\
  \texttt{03f73f} & 7 & 2 & 2 & 6 & 72 & 2 \\
  \texttt{01bbdf} & 7 & 2 & 1 & 7 & 48 & 3 \\
  \texttt{01b3df} & 7 & 2 & 1 & 8 & 16 & 2 \\
  \texttt{01b3de} & 7 & 2 & 1 & 9 & 48 & 3 \\
  \texttt{007fff} & 7 & 2 & 2 & 6 & 144 & 3 \\
  \texttt{007fbf} & 7 & 2 & 2 & 7 & 48 & 3 \\
  \texttt{007fbc} & 7 & 2 & 2 & 9 & 144 & 3 \\
  \texttt{003bdf} & 7 & 2 & 1 & 9 & 36 & 2 \\
  \texttt{003b9f} & 7 & 2 & 1 & 10 & 12 & 2 \\
  \texttt{003b9c} & 7 & 2 & 1 & 12 & 72 & 2 \\
  \texttt{0099cf} & 7 & 2 & 1 & 11 & 240 & 3 \\
  \texttt{0007ff} & 7 & 2 & 2 & 10 & 240 & 3 \\
  \texttt{0007fe} & 7 & 2 & 2 & 11 & 240 & 3 \\
  \texttt{0003df} & 7 & 2 & 1 & 12 & 48 & 3 \\
  \texttt{0003de} & 7 & 2 & 1 & 13 & 48 & 3 \\
  \texttt{0019cf} & 7 & 2 & 1 & 12 & 24 & 2 \\
  \texttt{00198f} & 7 & 2 & 1 & 13 & 16 & 2 \\
  \texttt{0008c7} & 7 & 2 & 1 & 15 & 144 & 3 \\
  \texttt{00003f} & 7 & 2 & 2 & 15 & 720 & 3 \\
  \texttt{0001cf} & 7 & 2 & 1 & 14 & 24 & 2 \\
  \texttt{0001ce} & 7 & 2 & 1 & 15 & 24 & 2 \\
  \texttt{0000c7} & 7 & 2 & 1 & 16 & 24 & 2 \\
  \texttt{0000c6} & 7 & 2 & 1 & 17 & 48 & 2 \\
  \texttt{000043} & 7 & 2 & 1 & 18 & 144 & 3 \\
  \texttt{00001f} & 7 & 2 & 1 & 16 & 120 & 3 \\
  \texttt{00000f} & 7 & 2 & 1 & 17 & 48 & 3 \\
  \texttt{000007} & 7 & 2 & 1 & 18 & 36 & 2 \\
  \texttt{000003} & 7 & 2 & 1 & 19 & 48 & 3 \\
  \texttt{000001} & 7 & 2 & 1 & 20 & 240 & 3 \\
  \texttt{07fffffff} & 7 & 3 & 3 & 4 & 144 & 3 \\
  \texttt{00e3f0fdc} & 7 & 3 & 2 & 17 & 144 & 3 \\
  \texttt{002098267} & 7 & 3 & 1 & 25 & 240 & 4 \\
  \texttt{000ffbfe0} & 7 & 3 & 2 & 17 & 24 & 3 \\
  \texttt{000f7bde7} & 7 & 3 & 2 & 16 & 8 & 3 \\
  \texttt{000f7bde0} & 7 & 3 & 2 & 19 & 16 & 3 \\
  \texttt{0005b96ef} & 7 & 3 & 1 & 19 & 48 & 3 \\
  \texttt{0005b96e0} & 7 & 3 & 1 & 23 & 48 & 3 \\
  \texttt{0003f8fff} & 7 & 3 & 2 & 16 & 24 & 3 \\
  \texttt{0003f8fe3} & 7 & 3 & 2 & 19 & 48 & 5 \\
  \texttt{0003f0fdf} & 7 & 3 & 2 & 18 & 24 & 3 \\
  \texttt{0003f0fdc} & 7 & 3 & 2 & 20 & 24 & 3 \\
  \texttt{0003f0fc3} & 7 & 3 & 2 & 21 & 24 & 3 \\
  \texttt{0003f0fc0} & 7 & 3 & 2 & 23 & 48 & 4 \\
  \texttt{0001b86ef} & 7 & 3 & 1 & 21 & 8 & 3 \\
  \texttt{0001b86e3} & 7 & 3 & 1 & 23 & 16 & 3 \\
  \texttt{0001b06cf} & 7 & 3 & 1 & 23 & 16 & 3 \\
  \texttt{0001b06c3} & 7 & 3 & 1 & 25 & 8 & 3 \\
  \texttt{0001b06c0} & 7 & 3 & 1 & 27 & 48 & 3 \\
  \texttt{000098267} & 7 & 3 & 1 & 26 & 24 & 3 \\
  \texttt{000098263} & 7 & 3 & 1 & 27 & 16 & 3 \\
  \texttt{0000781ff} & 7 & 3 & 2 & 22 & 144 & 3 \\
  \texttt{0000781fe} & 7 & 3 & 2 & 23 & 144 & 3 \\
  \texttt{000018067} & 7 & 3 & 1 & 28 & 24 & 3 \\
  \texttt{000018066} & 7 & 3 & 1 & 29 & 24 & 3 \\
  \texttt{000008023} & 7 & 3 & 1 & 31 & 144 & 3 \\
  \texttt{000007fff} & 7 & 3 & 3 & 20 & 720 & 3 \\
  \texttt{000003fff} & 7 & 3 & 2 & 21 & 48 & 3 \\
  \texttt{000003dff} & 7 & 3 & 2 & 22 & 16 & 3 \\
  \texttt{000003dfe} & 7 & 3 & 2 & 23 & 48 & 3 \\
  \texttt{0000016ef} & 7 & 3 & 1 & 25 & 120 & 3 \\
  \texttt{000000fff} & 7 & 3 & 2 & 23 & 36 & 3 \\
  \texttt{000000fdf} & 7 & 3 & 2 & 24 & 12 & 3 \\
  \texttt{000000fdc} & 7 & 3 & 2 & 26 & 72 & 3 \\
  \texttt{0000006ef} & 7 & 3 & 1 & 26 & 12 & 3 \\
  \texttt{0000006cf} & 7 & 3 & 1 & 27 & 8 & 3 \\
  \texttt{000000267} & 7 & 3 & 1 & 29 & 48 & 3 \\
  \texttt{0000001ff} & 7 & 3 & 2 & 26 & 48 & 3 \\
  \texttt{0000001fe} & 7 & 3 & 2 & 27 & 48 & 3 \\
  \texttt{0000000ef} & 7 & 3 & 1 & 28 & 12 & 3 \\
  \texttt{0000000ee} & 7 & 3 & 1 & 29 & 12 & 3 \\
  \texttt{000000067} & 7 & 3 & 1 & 30 & 8 & 3 \\
  \texttt{000000066} & 7 & 3 & 1 & 31 & 16 & 3 \\
  \texttt{000000023} & 7 & 3 & 1 & 32 & 36 & 3 \\
  \texttt{00000001f} & 7 & 3 & 2 & 30 & 240 & 3 \\
  \texttt{00000000f} & 7 & 3 & 1 & 31 & 48 & 3 \\
  \texttt{000000007} & 7 & 3 & 1 & 32 & 24 & 3 \\
  \texttt{000000003} & 7 & 3 & 1 & 33 & 24 & 3 \\
  \texttt{000000001} & 7 & 3 & 1 & 34 & 144 & 3 \\
  \bottomrule
\end{longtable}

\end{center}

\end{document}